\newtheorem{thm}{Theorem}[section]
\newtheorem{Lemma}[thm]{Lemma}
\newtheorem{remark}{Remark}[section]
\newtheorem{theorem}[thm]{Theorem}
\newtheorem{proposition}[thm]{Proposition}
\numberwithin{equation}{section}
\newcommand{\beq}{\begin{equation}}
\newcommand{\eeq}{\end{equation}}
\newcommand{\ben}{\begin{eqnarray}}
\newcommand{\een}{\end{eqnarray}}
\newcommand{\beno}{\begin{eqnarray*}}
\newcommand{\eeno}{\end{eqnarray*}}
\newcommand{\bv}{\mathbf{v}}
\newcommand{\bh}{\mathbf{h}}
\newcommand{\bF}{\mathbf{F}}
\newcommand{\bG}{\mathbf{G}}
\newcommand{\bE}{\mathbf{E}}
\newcommand{\bH}{\mathbf{H}}
\newcommand{\by}{\mathbf{y}}
\newcommand{\bx}{\mathbf{x}}
\newcommand{\bU}{\mathbf{U}}
\numberwithin{equation}{section}
\begin{document}
\title[Incompressible Elastic equations]{Local well-posedness for incompressible neo-Hookean Elastic equations in almost critical Sobolev spaces}

\subjclass[2010]{Primary 35Q35,35R05}

\author{Huali Zhang}
\address{Hunan University, School of Mathematics, Lushan South Road in Yuelu District, Changsha, 410882, People's Republic of China.}

\email{hualizhang@hnu.edu.cn}

\date{\today}

\keywords{incompressible elastic equations, low regularity solutions, null structure, bilinear estimate.}

\begin{abstract}
Inspired by a pioneer work of Andersson-Kapitanski \cite{AK}, we prove the local well-posedness of the Cauchy problem of incompressible neo-Hookean equations if the initial deformation and velocity belong to $H^{s+1}(\mathbb{R}^n) \times H^{s}(\mathbb{R}^n), s>\frac{n+1}{2}$ ($n=2,3$). Moreover, if the initial data is small, then we can lower the regularity to $s>\frac{n}{2}$, where $\frac{n+2}{2}$ and $\frac{n}{2}$ is respectively a scaling-invariant exponent for deformation and velocity in Sobolev spaces. Our new observation relies on two folds: a reduction to a second-order wave-elliptic system of deformation and velocity; and a "wave-map type" null form intrinsic in this coupled system. In particular, the wave nature with "wave-map type" null form allows us to prove a bilinear estimate of Klainerman-Machedon type for nonlinear terms. So we can lower $\frac12$-order regularity in 3D and $\frac34$-order regularity in 2D for well-posedness compared with \cite{AK}.
\end{abstract}
	
\maketitle
\section{Introduction}
This paper considers solutions to the Cauchy problem of incompressible neo-Hookean elastic equations for low regularity initial data in two or three spatial dimensions. To fix the formulation of this system, let us introduce the motion of the neo-Hookean material, which can also be found in \cite{AK}.
\subsection{The motion in Lagrangian settings}
We assume that an elastic body occupies the whole space $\mathbb{R}^n$ with coordinates $\by$, and $\by=(y^1,y^2,\cdots, y^n)^{\text{T}}$. A generic point $\by\in \mathbb{R}^n$ will find itself at the location $\bx(t,\by)$ at time $t$, and $\bx=(x^1, x^2,\cdots, x^n)^{\text{T}}$. Incompressibility means that
\begin{equation}\label{in0}
	\text{det} \frac{\partial \bx(t,\by)}{\partial \by}=1.
\end{equation}
The action functional
\begin{equation*}
	\int \int \frac12 \left| \frac{\partial \bx(t,\by)}{\partial t} \right|^2+ p(t,\by)\left( \text{det} \frac{\partial \bx(t,\by)}{\partial \by}- 1 \right) d\by dt,
\end{equation*}
gives the Euler-Lagrange equations
\begin{equation}\label{in40}
	\frac{\partial^2 x^i}{\partial t^2}+ \frac{\partial p}{\partial x^i}=0, \quad i=1,2,\cdots,n.
\end{equation}
Above, $p(t,\by)$ is a Lagrangian multiplier. Equation \eqref{in40} describes the ideal fluid (incompressible Euler equations), cf. \cite{Lag}. For homogeneous, isotropic and hyperelastic materials, adding the potential energy, the action takes form
\begin{equation}\label{in3}
	\int \int \frac12 \left| \frac{\partial \bx(t,\by)}{\partial t} \right|^2- W\left( \frac{\partial \bx(t,\by)}{\partial \by} \right)+ p(t,\by)\left( \text{det} \frac{\partial \bx(t,\by)}{\partial \by}- 1 \right) d\by dt,
\end{equation}
Here $W$ is the stored energy functions and
$p$ is a Lagrangian multiplier, cf. \cite{MH}. Let us focus on the most simplest case, i.e., the neo-Hookean case, in which the strain
energy functional is simply given by
\begin{equation*}
	W\left( \frac{\partial \bx(t,\by)}{\partial \by} \right)=\frac12 \left| \frac{\partial \bx}{\partial \by} \right|^2=\frac12  \sum^n_{i,a=1} \left| \frac{\partial x^i}{\partial y^a} \right|^2.
\end{equation*}
Clearly, in the neo-Hookean case, the corresponding Euler-Lagrange equation of \eqref{in3} are\footnote{
In the summation the repeated indices run from $1$ to $n$.}
\begin{equation}\label{in4}
		 \frac{\partial^2 x^i}{\partial t^2}- \frac{\partial^2 x^i}{\partial y^a \partial y^a}+ \frac{\partial p}{\partial x^i}=0, \quad i=1,2,\cdots,n.
\end{equation}
Therefore, Equations \eqref{in4} complemented by the constraint \eqref{in0} describes the motion of the incompressible neo-Hookean material in the Lagrange cooridnates $(t,\by)$:
\begin{equation}\label{in5}
	\begin{cases}
	& \frac{\partial^2 x^i}{\partial t^2}- \frac{\partial^2 x^i}{\partial y^a \partial y^a}+ \frac{\partial p}{\partial x^i}=0, \quad i=1,2,\cdots,n,
	\\
	& \text{det} (\frac{\partial \bx}{\partial \by} )=1.
	\end{cases}
\end{equation}
To be simple, we consider\footnote{Our result also works for $\bx=A\by+\bU(t,\by)$, where $A$ is a constant matrix.}
\begin{equation}\label{aa0}
	\bx=\by+\bU(t,\by),
\end{equation}
where $\bU=(U^1,U^2,\cdots,U^n)$. By substituting \eqref{aa0} to \eqref{in5}, then \eqref{in5} becomes to
\begin{equation}\label{aa1}
	\begin{cases}
		& \frac{\partial^2 U^i}{\partial t^2}- \frac{\partial^2 U^i}{\partial y^a \partial y^a}+ \frac{\partial p}{\partial x^i}=0, \quad i=1,2,\cdots,n,
		\\
		& \text{det} (I+\frac{\partial \bU}{\partial \by} )=1.
	\end{cases}
\end{equation}
For we study the Cauchy problem of \eqref{aa1}, so we set the initial conditions
\begin{equation}\label{aa2}
		( \bU, \frac{\partial \bU}{\partial t})|_{t=0}=(\bU_0, \bv_0) \in H^{s+1}(\mathbb{R}^n) \times H^{s}(\mathbb{R}^n) .
\end{equation}
Seeing \eqref{aa1}, it is invariant under the scaling
\begin{equation}\label{aa3}
	(t,\by)\mapsto (\lambda t, \lambda \by).
\end{equation}
This tells us that $\bU_{\lambda}=\lambda^{-1}\bU(\lambda t, \lambda \by)$ is also a solution  of \eqref{aa1} if $\bU$ is a solution of \eqref{aa1}. Under the scaling, the homogeneous Sobolev norm of the initial data scales as
\begin{equation}\label{aa4}
	\begin{split}
		& \| \bU_{\lambda}(0,\cdot)\|_{\dot{H}^{s+1}(\mathbb{R}^n)}=\lambda^{s-\frac{n}{2}} \| \bU(0,\cdot) \|_{\dot{H}^{s+1}(\mathbb{R}^n)},
		\\
		& \| \frac{\partial}{\partial t}\bU_{\lambda}(0,\cdot)\|_{\dot{H}^s(\mathbb{R}^n)}=\lambda^{s-\frac{n}{2}} \| \frac{\partial}{\partial t}\bU(0,\cdot) \|_{\dot{H}^s(\mathbb{R}^n)}.
	\end{split}
\end{equation}
Taking $s_c=\frac{n}{2}$, then the norm $\| \bU_0\|_{ \dot{H}^{s_c+1} }$ and $\| \bv_0\|_{ \dot{H}^{s_c} }$ norm of the initial data $(\bU_0,\bv_0)$ is preserved under the scaling. Hence, $s_c+1$ is called the critical exponent of the deformation, and $s_c$ the critical exponent of the velocity. In the viewpoint of scaling, one would expect local well-posedness for data in the Sobolev space ${H}^{s+1} \times {H}^s$ for $s>s_c$ (subcritical case), global existence for small data in $\dot{H}^{s+1} \times \dot{H}^s$ for $s=s_c$ (critical case), and some form of ill-posedness for data in ${H}^{s+1} \times {H}^s$ for $s<s_c$ (supcritical case). In the paper, we study the local well-posedness of \eqref{aa1}-\eqref{aa2} in the meaning of subcritical case. To understand the system \eqref{aa1} much more clearly, let us also reduce it in the Euler picture.
\subsection{The dynamics in Euler settings}
To distinguish between the derivatives in Lagrange coordinate and Euler picture, we denote by $\partial_t$ and $\partial_i$ the partial derivatives in the Euler coordinate and by $\frac{\partial}{\partial t}$ and $\frac{\partial}{\partial y^a}$ the derivatives in the Lagrangian coordinate.

In the Euler picture, the independent variables are $(t,\bx)$. Then the vector
\begin{equation}\label{in6}
	\bar{\bv}(t,\bx)=\frac{\partial \bx}{\partial t},
\end{equation}
is the velocity in the Eulerian coordinates, and we set $\bar{\bv}=(\bar{v}^1,\bar{v}^2,\cdots,\bar{v}^n)$. The incompressibility condition \eqref{in0} translates into
\begin{equation}\label{in7}
	\text{div}\bar{\bv} = \partial_i \bar{v}^i=0.
\end{equation}
To describe the Eulerian form of equation \eqref{in4} for the incompressible neo-Hookean material. We introduce the deformation gradient $\bar{\bF}$ by
\begin{equation}\label{in8}
	\bar{F}^{ia}(t,\bx)=\frac{\partial x^i}{\partial y^a}, \quad a=1,2,\cdots, n.
\end{equation}
We also define the deformation matrix as
\begin{equation}\label{in9}
	\bar{\bF}=( \bar{F}^{ia})_{n\times n}.
\end{equation}
Additionally, from the Piola's identity(cf. \cite{MH}), we have
\begin{equation}\label{in10}
\text{div}\bar{\bF} =0.
\end{equation}
In the Euler picture, \eqref{in4} is transformed to
\begin{equation}\label{in11}
	\partial_t \bar{v}^i + \bar{v}^j \partial_j \bar{v}^i- \bar{F}^{kb}\partial_k \bar{F}^{ib}+\partial_i \bar{p}=0.
\end{equation}
Above, $\bar{p}$ is transformed by $p$ in Euler picture, and $\bar{p}=\bar{p}(t,\bx)=\bar{p}(t,\bx(t,\by))=p(t,\by)$.

On the other hand, we also have
\begin{equation}\label{in17}
	\frac{d}{dt}\bar{F}^{ia} = \partial_t \bar{F}^{ia}(t,\bx(t,\by))= \partial_t \bar{F}^{ia}+ \bar{v}^j \partial_j \bar{F}^{ia},
\end{equation}
and
\begin{equation}\label{in18}
	\frac{d}{dt} \bar{F}^{ia} = \frac{\partial}{\partial_t}  ( \frac{\partial x^i }{\partial y^a})= \frac{\partial}{\partial y^a} ( \frac{\partial x^i }{\partial t})=\frac{\partial {v}^i}{\partial y^a} =\bar{F}^{ka}\partial_k \bar{v}^{i}
\end{equation}
Due to \eqref{in17} and \eqref{in18}, it yields
\begin{equation}\label{in19}
	\partial_t \bar{F}^{ia} + \bar{v}^j \partial_j \bar{F}^{ia}- \bar{F}^{ka}\partial_k \bar{v}^{i}=0.
\end{equation}
Combining \eqref{in7}, \eqref{in10}, \eqref{in11}, and \eqref{in19}, the incompressible neo-Hookean equations in Euler coordinates is described by
\begin{equation}\label{IEE}
\begin{cases}
& \partial_t \bar{v}^i + \bar{v}^j \partial_j \bar{v}^i- \bar{F}^{kb}\partial_k \bar{F}^{ib}+\partial_i \bar{p}=0,
\\
& \partial_t \bar{F}^{ia} + \bar{v}^j \partial_j \bar{F}^{ia}- \bar{F}^{ka}\partial_k \bar{v}^{i}=0,
\\
& \text{div}\bar{\bv}=\text{div}\bar{\bF}=0.
\end{cases}
\end{equation}
Similarly, in the Euler picture, the incompressible Euler equation \eqref{in40} becomes to
\begin{equation}\label{incom}
	\begin{cases}
		& \partial_t \bar{v}^i + \bar{v}^j \partial_j \bar{v}^i+\partial_i \bar{p}=0,
		\\
		& \text{div}\bar{\bv}=0.
	\end{cases}
\end{equation}
As we known, System \eqref{incom} is a fundamental equation in fluid dynamics. 
\subsection{Previous results}
Let us first recall some historical results on incompressible Euler equations \ref{incom}. Kato and Ponce in \cite{KP} proved the local well-posedness of solutions if $\bv_0 \in W^{s,p}(\mathbb{R}^n)$, $s>1+\frac{n}{p}$. Chae in \cite{Chae} proved the local existence of solutions by setting $\bv_0$ in Triebel-Lizorkin spaces. In the opposite direction, Bourgain and Li \cite{BL1, BL2} proved that the Cauchy problem is ill-posed for $\bv_0 \in W^{1+\frac{n}{p}}(\mathbb{R}^n), 1\leq p< \infty, n=2,3$.

Compared with incompressible Euler equations, the well-posedness results for incompressible elastic equations are so different, for there are internal forces in fluids. For small solutions, the 3D global well-posedness of smooth solutions was solved by Sideris-Thomases \cite{Sid}, where the inherent null structure plays an essential role. The almost global behavior of the 2D problem
is studied by Lei-Sideris-Zhou \cite{LSZ}. Short afterward, Lei in \cite{Lei} proved the global existence of solutions for 2D problems in Lagrangian settings. By using a different approach, Wang \cite{Wang} gave an alternative proof of 2D result \cite{Lei} in the Eulerian formulation. For local solutions, Andersson and Kapitanski \cite{AK} initially established the local well-posedness of rough solutions for \eqref{aa1}-\eqref{aa2} if the initial data $(\bU_0, \bv_0 ) \in H^{3+}(\mathbb{R}^3)\times H^{2+}(\mathbb{R}^3)$ or $(\bU_0, \bv_0 ) \in H^{\frac{11}{4}+}(\mathbb{R}^2)\times H^{\frac{7}{4}+}(\mathbb{R}^2)$, with some additional regularity conditions on the vorticity. In \cite{AK}, the key point relies on decomposing vorticities into a first-order half-wave system and establishing a Strichartz estimate of vorticities. Concerning wave equations, even for linear waves, the Strichartz estimates hold only when the regularity of the initial data is greater than $2$ in 3D or greater than $\frac74$ in 2D, please refer to the well-known endpoint result by Keel and Tao \cite{KeelTao}. So if we want to lower the regularity in \cite{AK}, we shall explore other good structures and estimates in the nonlinear terms. In the historical results \cite{Sid, Lei, Wang}, we already know that there exists a null structure inside the nonlinearity. In general, there are three basic types of null forms\footnote{In our paper, we also call $Q_0$-type as wave-map type.}(cf. Klainerman \cite{Klai}): 
\begin{equation}\label{nuform}
	\begin{split}
		& Q_0(f,g)= \partial_t f \partial_t g- \partial_i f  \partial_i g, 
		\\
		& Q_{ij}(f,g)= \partial_i f \partial_j g- \partial_j f  \partial_i g, 
		\\
		& Q_{0j}(f,g)= \partial_t f \partial_j g- \partial_j f  \partial_t g.
	\end{split}
\end{equation}
As known to us, the local well-posedness results depend on the structure of null forms. To explain it, let us introduce the systems of nonlinear wave equations
\begin{equation}\label{nuf0}
	\begin{cases}
		& \square u^{I}=\sum_{J,K}\Gamma^{I}_{JK} B^I_{JK}(\partial u^J,\partial u^K),
		\\
		&(u^I, \partial_t u^I)|_{t=0}=(f^I,g^I)\in H^s(\mathbb{R}^n) \times H^{s-1}(\mathbb{R}^n),
	\end{cases}
\end{equation}
where $u^I: \mathbb{R}^{1+n} \rightarrow \mathbb{R}^n$, $B^I_{JK}$ is a quadratic form, and $\Gamma^{I}_{JK}$ is a smooth function of $u^I$. For $B^I_{JK}$ with the general quadratic forms, the local well-posedness of \eqref{nuf0} for $s>2$ in three dimensions was proved by Ponce-Sideris \cite{PS}, which is sharp in light of the counterexamples of Lindblad
\cite{Lind1,Lind2}. If $n\geq 5$, the local well-posedness of \eqref{nuf0} for $s>\frac{n+5}{4}$ was proved by Tataru \cite{T2}. If $n=2,4$, the local well-posedness of \eqref{nuf0} for $s>\max\{\frac{n}{2}, \frac{n+5}{4}\}$ was obtained by Zhou \cite{Zhou1}. The results for $n\leq 5$ also turn out to be sharp due to the ill-posedness result from Fang-Wang \cite{FW}, Liu-Wang \cite{LW} and Ohlmann \cite{O}. For $B^I_{JK}$ are all null forms, we call \eqref{nuf0} the null-form quadratic wave equations. For $B^I_{JK}$ are only $Q_0$-type null form, we call it $Q_0$-type quadratic wave equations. In these situations, one can expect a better result than in a general quadratic form. For $n=3$, Klainerman and Machedon \cite{KM1} firstly studied the bilinear estimates of null forms. Later, Klainerman and Machedon \cite{KM2} also proved the local well-posedness if $s>\frac32$ for the null-form quadratic wave equations \eqref{nuf0}. If $n\geq 2$, the local well-posedness for $s>\frac{n}{2}$ of $Q_0$-type quadratic wave equations \eqref{nuf0} was established by Klainerman and Selberg \cite{KS1} (see also in Klainerman-Selberg \cite{KS2} for a simplified proof). But, when $n=2$, it is known to be false for the other null forms, for which the best result is for $s>\frac54$ by
Zhou \cite{Zhou}, who also proved that it is sharp. If $n=1$, Keel and Tao \cite{KTao} showed that the wave maps is locally well-posed\footnote{The wave maps has $Q_0$-type null form, which is also a special model of \eqref{nuf0}.} for $s>\frac12$. We should also mention some other important low regularity results on \eqref{nuf0} due to Foschi-Klainerman\cite{FK}, Grigoryan-Nahmod\cite{GN}, Gr\"unrock \cite{Grun}, Selberg \cite{Selberg}, Tao \cite{Tao1,Tao2}, Tataru \cite{T1}, Wang-Zhou \cite{WZ} and so on. 
Let us go back to incompressible elastic equations. Although the results \cite{Sid, Lei, Wang} told us that the system \eqref{aa1} has a null structure, they didn't directly point out the $Q_0$-type of \eqref{aa1}. As we mentioned before, the well-posedness result of \eqref{aa1} may be different if it's $Q_0$-type or $Q_{ij}$, $Q_{0i}$-type. The above important results motivate us to explore the structure of \eqref{aa1} and lower the regularity conditions in \cite{AK}.

Motivated by the insightful work \cite{AK}, in this paper, we present that there is a "$Q_0$-type" null structure for incompressible elastic equations \eqref{in5}. Using the stress term and obtaining the "$Q_0$-type" null structure are our new findings. Although it involves nonlocal operators, by an equivalent transform of norms in Lagrangian coordinates and Euler pictures, we can also prove a bilinear estimate for nonlinear terms in $X_{s,b}$ spaces. So we prove the local well-posedness of \eqref{aa1}-\eqref{aa2} for all $s$ above scaling, $s>\frac{n}{2}$.

Let us state our results as follows.

\subsection{Statement of the result}
Before state our results, let us set 
\begin{equation}\label{sn}
	s_0(n)=
	\begin{cases}
		\frac74, & \ \ n=2,
		\\  2, & \ \ n=3.
	\end{cases}
\end{equation} 
Our first result is about the local well-posedness of incompressible elastic equations.
\begin{theorem}\label{thm}
	Let $n=2,3$ and $\frac{n+1}{2}<s\leq s_0(n)$, where $s_0(n)$ is stated in \eqref{sn}. Then there exists a positive number $T$ (depending on $s$ and $\|(\bU_0,\bv_0)\|_{H^{s+1}\times H^s}$) such that the Cauchy problem of \eqref{aa1}-\eqref{aa2} is locally well-posed on $[0,T]\times \mathbb{R}^n$ for data $(\bU_0,\bv_0)$ in $ H^{s+1}(\mathbb{R}^n) \times H^s(\mathbb{R}^n)$. Moreover, the following energy estimate hold:
	\begin{equation*}
		\begin{split}
			& \|{\bU}\|_{L^\infty_{[0,T]}H^{s+1}}+\|\frac{\partial \bU}{\partial t}\|_{L^\infty_{[0,T]}H^{s}}  \leq C ( \|\bU_0\|_{H^{s+1}} + \|\bv_0\|_{H^s}).
		\end{split}	
	\end{equation*}
\end{theorem}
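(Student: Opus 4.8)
The plan is to run a contraction-mapping argument for \eqref{aa1}--\eqref{aa2} in Bourgain-type spaces $X_{s,b}$ adapted to the flat d'Alembertian $\square=\partial_t^2-\Delta_{\by}$, after first recasting \eqref{aa1} as a closed wave--elliptic system for the pair $(\bU,\bv)$, $\bv=\tfrac{\partial\bU}{\partial t}$, whose nonlinearity carries a $Q_0$-type ("wave-map") null form. I restrict throughout to the stated range $\tfrac{n+1}{2}<s\le s_0(n)$; for $s>s_0(n)$ one can instead proceed via Strichartz estimates as in \cite{AK}. \textbf{Step 1 (wave--elliptic reduction).} Write $F=I+\tfrac{\partial\bU}{\partial\by}$. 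Since $\det F=1$ we have $F^{-1}=\mathrm{cof}\,F$ up to transposition, so the Euler-coordinate gradient in \eqref{aa1} becomes $\tfrac{\partial p}{\partial x^i}=(\mathrm{cof}\,F)^{ia}\tfrac{\partial p}{\partial y^a}$ and the momentum equation reads $\square U^i=-(\mathrm{cof}\,F)^{ia}\tfrac{\partial p}{\partial y^a}$. Differentiating $\det(I+\tfrac{\partial\bU}{\partial\by})=1$ once in $t$ and using Piola's identity $\tfrac{\partial}{\partial y^a}(\mathrm{cof}\,F)^{ia}=0$ gives the Lagrangian divergence-free relation $(\mathrm{cof}\,F)^{ia}\tfrac{\partial v^i}{\partial y^a}=0$; differentiating once more and inserting $\tfrac{\partial v^i}{\partial t}=\Delta U^i-(\mathrm{cof}\,F)^{ia}\tfrac{\partial p}{\partial y^a}$ produces a uniformly elliptic, divergence-form equation for $p$,
\[
\frac{\partial}{\partial y^a}\Big((\mathrm{cof}\,F)^{ia}(\mathrm{cof}\,F)^{ib}\,\frac{\partial p}{\partial y^b}\Big)=\mathcal N\big(\tfrac{\partial\bU}{\partial\by},\tfrac{\partial\bv}{\partial\by}\big),
\]
whose right-hand side is quadratic in $(\tfrac{\partial\bU}{\partial\by},\tfrac{\partial\bv}{\partial\by})$ once one uses that $\det(I+\tfrac{\partial\bU}{\partial\by})=1$ forces $\mathrm{div}_{\by}\bU$ to be itself quadratic in $\tfrac{\partial\bU}{\partial\by}$. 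Since the flow $\by\mapsto\bx$ stays bi-Lipschitz for short time, low-regularity elliptic theory yields $\|\nabla_{\by}^2 p\|_{H^{s-1}}\lesssim\|\mathcal N\|_{H^{s-1}}$ whenever $\tfrac{\partial\bU}{\partial\by}\in H^s$, $s>\tfrac n2$. Substituting $p=p[\bU,\bv]$ back gives $\square\bU=\mathcal F(\bU,\bv)$, and differentiating once more in $t$ gives a companion wave equation $\square\bv=\mathcal G(\bU,\bv)$; thus $(\bU,\bv)$ solves a second-order wave--elliptic system as advertised.

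\textbf{Step 2 (the null structure, the main point).} The key new input to verify is that $\mathcal F$ and $\mathcal G$, although they contain the nonlocal composition $(\text{coefficient})\circ(\text{elliptic})^{-1}\circ(\text{coefficient})^{2}$, are assembled entirely from the bilinear forms produced by differentiating $\det F$ twice and by commuting $\tfrac{d}{dt}$ with spatial derivatives; using the antisymmetry of the cofactor expansion together with $\tfrac{\partial v^i}{\partial y^a}=\bar F^{ka}\partial_k\bar v^i$, these bilinear forms reorganize, modulo $O(\tfrac{\partial\bU}{\partial\by})$ coefficients and order-$\le0$ Riesz-type operators, into the null forms of \eqref{nuform}, the dominant contribution being of $Q_0$ type. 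Schematically, $\square U^i$ (and likewise $\square v^i$) becomes a finite sum of terms $\mathcal R_\ell\big[A_\ell(\tfrac{\partial\bU}{\partial\by})\,\partial^{\beta_\ell}Q^{(\ell)}(\cdot,\cdot)\big]$ with $Q^{(\ell)}\in\{Q_0,Q_{ab},Q_{0a}\}$ evaluated on the components of $(\bU,\bv)$, $A_\ell$ smooth with $A_\ell(0)$ constant, and $\mathcal R_\ell$ a Fourier multiplier whose order is precisely compensated by the derivative gain intrinsic to the null form, so that the whole maps $X_{s+1,b}\times X_{s,b}$ into $X_{s,b-1}\times X_{s-1,b-1}$. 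Making this reorganization precise — genuinely exhibiting the $Q_0$ null form hidden inside the pressure term in the presence of the elliptic inverse and the $\mathrm{cof}\,F$ coefficients — is what I expect to be the main obstacle.

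\textbf{Step 3 (function spaces, linear theory, iteration).} Fix $b$ slightly above $\tfrac12$ and work in $X_{s,b}=X_{s,b}(\square)$ with its time-restriction $X^T_{s,b}$. I will use the standard linear package: the transfer estimate $\|\bU\|_{X^T_{s+1,b}}+\|\bv\|_{X^T_{s,b}}\lesssim\|\bU_0\|_{H^{s+1}}+\|\bv_0\|_{H^s}+\|\mathcal F\|_{X^T_{s,b-1}}+\|\mathcal G\|_{X^T_{s-1,b-1}}$, the embedding $X^T_{s+1,b}\hookrightarrow C([0,T];H^{s+1})$, and the gain $\|\cdot\|_{X^T_{\sigma,b'}}\lesssim T^{b-b'}\|\cdot\|_{X^T_{\sigma,b}}$ for $b'<b$ that supplies smallness in $T$. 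The essential nonlinear input is the Klainerman--Machedon / Klainerman--Selberg bilinear estimate $\|Q^{(\ell)}(\phi,\psi)\|_{X_{\sigma-1,b-1}}\lesssim\|\phi\|_{X_{\sigma,b}}\|\psi\|_{X_{\sigma,b}}$ valid in the range of $s$ at hand (cf.\ \cite{KM1,KM2,KS1}), combined with the facts that order-$\le0$ multipliers and multiplication by $H^s$ functions ($s>\tfrac n2$) preserve $X_{s,b}$, and that the near-identity change of variables $\by\mapsto\bx=\by+\bU$ preserves $H^\sigma$-norms up to constants, which lets one pass freely between the Lagrangian and Euler frames. Together these give $\|\mathcal F(\bU,\bv)\|_{X^T_{s,b-1}}+\|\mathcal G(\bU,\bv)\|_{X^T_{s-1,b-1}}\lesssim\mathcal P\big(\|(\bU,\bv)\|_{X^T_{s+1,b}\times X^T_{s,b}}\big)\|(\bU,\bv)\|^2_{X^T_{s+1,b}\times X^T_{s,b}}$ for a polynomial $\mathcal P$, together with the matching difference bound; hence the solution map $(\bU,\bv)\mapsto(\widetilde\bU,\widetilde\bv)$ produced by solving the linear wave equations with these nonlinearities and the given Cauchy data is a contraction on a small ball of $X^T_{s+1,b}\times X^T_{s,b}$ once $T$ is small, and its fixed point solves \eqref{aa1}--\eqref{aa2}.

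\textbf{Step 4 (uniqueness, continuous dependence, energy bound, threshold).} Uniqueness follows by applying the same bilinear and transfer estimates to the difference of two solutions, and continuous dependence on the data likewise (invoking a Bona--Smith regularization should the direct difference argument lose a derivative). The asserted bound $\|\bU\|_{L^\infty_{[0,T]}H^{s+1}}+\|\tfrac{\partial\bU}{\partial t}\|_{L^\infty_{[0,T]}H^s}\le C(\|\bU_0\|_{H^{s+1}}+\|\bv_0\|_{H^s})$ is then read off from $X^T_{s+1,b}\hookrightarrow C([0,T];H^{s+1})$ and the transfer estimate, with $C$ governed by the fixed radius of the ball. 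Finally, the appearance of $\tfrac{n+1}{2}$ rather than the scaling value $\tfrac n2$ in this theorem traces back to the non-perturbative quasilinear coefficients $\mathrm{cof}\,F=I+O(\tfrac{\partial\bU}{\partial\by})$ multiplying terms sitting at the wave-critical regularity, which forces an extra half-derivative of room; when the data is small these coefficients are $O(\varepsilon)$-close to the identity, the corresponding terms become genuinely perturbative, and the threshold descends to $s>\tfrac n2$.
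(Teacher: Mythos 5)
Your overall strategy (wave--elliptic reduction, null form, bilinear estimates, contraction in wave-Sobolev spaces) is the same as the paper's, but the two steps you yourself flag as delicate are precisely where the proposal does not close, and the paper resolves them in a way your outline does not capture. First, the null structure. You leave the identification of the null form as "the main obstacle" and allow the nonlinearity to contain all three types $Q_0,Q_{ab},Q_{0a}$ "modulo order-$\le 0$ Riesz-type operators," with $Q_0$ only "dominant." The paper's central observation is an exact algebraic identity, not an asymptotic one: writing $\bG=\bF-\bE$ and using $\frac{\partial v^i}{\partial y^j}=\frac{\partial F^{ij}}{\partial t}$ (see \eqref{A02}), the source of the pressure equation $-\frac{\partial v^m}{\partial y_k}\frac{\partial v^l}{\partial y_j}+\frac{\partial F^{im}}{\partial y_k}\frac{\partial F^{il}}{\partial y_j}$ becomes exactly $Q_0(G^{mk},G^{jl})$ contracted against $(\bF^{-1})^{kl}(\bF^{-1})^{mj}$, where $\bF^{-1}$ is a polynomial in $\bG$ by the cofactor formula \eqref{a06}; see \eqref{a03} and \eqref{a08}. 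There is no residual $Q_{ab}$ or $Q_{0a}$ contribution and no Riesz-type remainder to absorb. Without this exact identity the argument does not run, so the proposal is missing the paper's key idea rather than reproving it differently; note also that the right unknown is $\bG$ (whose data are $\nabla_y\bU_0,\nabla_y\bv_0$), not $(\bU,\bv)$.

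Second, the nonlocal elliptic inversion. In Step 3 you assume the composition of the solution-dependent elliptic inverse with the null form can be estimated directly in $X_{s,b-1}$. The operator to invert is $\Delta_x$ in the moving Euler frame (equivalently your divergence-form operator with $H^s$ coefficients), and it does not commute with the space-time Fourier multipliers defining the $H^{s,\theta}$ norms. The paper's workaround for Theorem \ref{thm} is to give up the temporal weight entirely: by \eqref{T0} the nonlinearity is estimated only in $H^{s-1,0}=L^2_tH^{s-1}_y$, where at each fixed time one changes variables using $\det(\partial\bx/\partial\by)=1$ (Proposition \ref{LD4}), applies flat elliptic regularity in $\bx$, and returns; the bilinear estimate must then be used at the endpoint $\theta+\varepsilon=1$ of Lemma \ref{bilinearE}, whose hypothesis $\frac{n-1}{2}+\theta+\varepsilon<s$ forces $s>\frac{n+1}{2}$. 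This --- not the size of the quasilinear coefficients $\mathrm{cof}\,\bF$ --- is where the half-derivative above scaling is lost; your Step 4 explanation of the threshold is therefore inaccurate, and your "near-identity change of variables" assumption is unjustified for the large data of Theorem \ref{thm} (the paper's Proposition \ref{LD4} carries constants depending on $\|\partial\bx/\partial\by\|_{L^\infty}$). For the small-data Theorem \ref{thmb} the paper instead inverts $\Delta_x$ perturbatively around $\Delta_y$ (see \eqref{g2}), which is the rigorous version of the perturbative mechanism you gesture at, and only there does the threshold descend to $s>\frac{n}{2}$.
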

\begin{remark}
	Compared with Andersson-Kapitanski \cite{AK}, we mainly find a $Q_0$ null form of \eqref{aa1}-\eqref{aa2} and use bilinear estimates to lower the regularity on initial data in Theorem \ref{thm}.
\end{remark}
Our second result is about the local well-posedness of small solutions of incompressible elastic equations.
\begin{theorem}\label{thmb}
	Let $n=2,3$ and $s>\frac{n}{2}$. For sufficiently small positive number $\eta>0$, set the initial data satisfying
	\begin{equation*}
		\|\bU_0\|_{H^{s+1}} + \|\bv_0\|_{H^s} \leq \eta.
	\end{equation*}
	Then the Cauchy problem of \eqref{aa1}-\eqref{aa2} is locally well-posed on $[0,1]\times \mathbb{R}^n$. Moreover, the following energy estimate hold:
	\begin{equation*}
		\begin{split}
			& \|{\bU}\|_{L^\infty_{[0,T]}H^{s+1}}+\|\frac{\partial \bU}{\partial t}\|_{L^\infty_{[0,T]}H^{s}}   \lesssim   \eta.
		\end{split}	
	\end{equation*}
\end{theorem}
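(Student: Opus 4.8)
The plan is to recast the Cauchy problem \eqref{aa1}--\eqref{aa2} as a genuinely semilinear wave equation for $\bU$ whose nonlinearity is a bounded nonlocal operator applied to $Q_0$-type (wave-map type) null forms, and then to solve it by a fixed-point argument in a Bourgain space $X_{s+1,b}$ on the fixed slab $[0,1]\times\mathbb{R}^n$, smallness of $\eta$ playing the role usually played by smallness of the time interval. For the recasting: expanding the determinant, the constraint in \eqref{aa1} is equivalent to $\dv\bU=-\mathfrak q(\na\bU)$, where $\mathfrak q$ is the sum of the $k\times k$ principal minors of $\na\bU$, $2\le k\le n$; it is purely quadratic when $n=2$ (indeed $\mathfrak q=\det\na\bU=Q_{12}(U^1,U^2)$) and quadratic plus a cubic determinant when $n=3$, each quadratic piece being itself a classical $Q_{ab}$ null form of the components of $\bU$. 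Taking the divergence in $\by$ of the wave equation in \eqref{aa1}, using Piola's identity to see that the first-order terms in $p$ cancel, and then using the equation once more to remove $\tfrac{\p^2}{\p t^2}\bU$ from the right-hand side, one finds that the pressure solves an elliptic equation
\beq\label{pl1}
	\left(\Delta-\mathcal L[\na\bU]\right)p\;=\;2\,\mathcal Q(\bU)+\mathcal C(\bU),
\eeq
where $\mathcal L[\na\bU]$ is a second-order operator with coefficients $O(\na\bU)$ (encoding the non-flat Lagrangian-to-Eulerian map), $\mathcal Q(\bU)$ is a fixed linear combination of the $Q_0$ null forms $Q_0(\p_a U^c,\p_b U^d)$, and $\mathcal C(\bU)$ is cubic. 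For $\na\bU$ small, $\Delta-\mathcal L$ is boundedly invertible, so $p=p(\bU)$ becomes an explicit nonlocal functional of $\bU$, and \eqref{aa1}--\eqref{aa2} turns into the closed semilinear problem
\beq\label{wv1}
	\frac{\p^2}{\p t^2}U^i-\Delta U^i=-\frac{\p p(\bU)}{\p x^i}=:\mathcal N^i(\bU),\qquad\left(\bU,\tfrac{\p\bU}{\p t}\right)\!\Big|_{t=0}=(\bU_0,\bv_0).
\eeq
Written out, $\mathcal N^i$ is --- to leading order --- a Calder\'on--Zygmund (double-Riesz-transform type) operator applied to $\mathcal Q(\bU)$, plus a cubic remainder. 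The near-identity change of variables $\by\mapsto\by+\bU$, under which $H^\sigma$-norms are equivalent for small $\bU$, is used freely to pass between the Lagrangian and Eulerian forms of these nonlinearities.

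\emph{Functional setting, linear theory, iteration.} Fix $b\in(\tfrac12,1)$ and let $X_{\sigma,b}$ be the completion of $\mathcal S(\mathbb{R}^{1+n})$ under $\|u\|_{X_{\sigma,b}}=\big\|\langle\xi\rangle^{\sigma}\langle|\tau|-|\xi|\rangle^{b}\,\widehat u(\tau,\xi)\big\|_{L^2_{\tau,\xi}}$, with the restriction norm $\|u\|_{X_{\sigma,b}[0,1]}$ on the slab. I shall use two standard facts: (i) the transfer estimate --- if $\square v=F$, $(v,\p_t v)|_{t=0}=(f,g)$, then $\|v\|_{X_{\sigma,b}[0,1]}\lesssim\|f\|_{H^\sigma}+\|g\|_{H^{\sigma-1}}+\|F\|_{X_{\sigma-1,b-1}[0,1]}$ and $(v,\p_t v)\in C([0,1];H^\sigma\times H^{\sigma-1})$; and (ii) $X_{\sigma,b}[0,1]\hookrightarrow C([0,1];H^\sigma)$ for $b>\tfrac12$. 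Granting the bilinear estimate \eqref{bile} below, one runs a Picard iteration on $B_\delta=\{\bU:\|\bU\|_{X_{s+1,b}[0,1]}\le\delta\}$ with $\delta=2C_0\eta$: the map $\Phi(\bU):=$ (the solution of $\square(\Phi\bU)^i=\mathcal N^i(\bU)$ with data $(\bU_0,\bv_0)$) obeys, by (i) and \eqref{bile},
\beno
	\|\Phi(\bU)\|_{X_{s+1,b}[0,1]}\le C_0\eta+C_1\delta^2+C_2\delta^3,\qquad\|\Phi(\bU)-\Phi(\widetilde\bU)\|_{X_{s+1,b}[0,1]}\le C(\delta+\delta^2)\,\|\bU-\widetilde\bU\|_{X_{s+1,b}[0,1]},
\eeno
so for $\eta$ small enough $\Phi$ maps $B_\delta$ into itself and is a contraction there; its unique fixed point $\bU$ solves \eqref{wv1}.

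\emph{The null-form bilinear estimate.} The core input is: for $n=2,3$, $s>\tfrac n2$, $b\in(\tfrac12,1)$, and any zeroth-order Fourier multiplier $m(D)$ of Mihlin type --- in particular $m(D)=\p_i(\Delta-\mathcal L)^{-1}\p_j$ with small coefficients ---,
\beq\label{bile}
	\big\|\,m(D)\,Q_0(\p_a u^c,\p_b v^d)\,\big\|_{X_{s,b-1}[0,1]}\ \lesssim\ \|\bu\|_{X_{s+1,b}[0,1]}\,\|\bv\|_{X_{s+1,b}[0,1]},
\eeq
together with the (easier) cubic bound $\|\mathcal C(\bu)\|_{X_{s,b-1}[0,1]}\lesssim\|\bu\|_{X_{s+1,b}[0,1]}^{3}$, which follows from the algebra property of $H^{s+1}$ (valid as $s+1>\tfrac n2+1$) and fact (ii). To prove \eqref{bile} one first discards $m(D)$: after a Littlewood--Paley decomposition it is bounded on $L^2$ uniformly in the (small) coefficients, hence harmless on every $X_{\sigma,b}$. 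Since $\p_a u^c,\p_b v^d\in X_{s,b}$, the residual estimate $\|Q_0(\p_a u^c,\p_b v^d)\|_{X_{s-1,b-1}}\lesssim\|\bu\|_{X_{s+1,b}}\|\bv\|_{X_{s+1,b}}$ is precisely the Klainerman--Machedon / Klainerman--Selberg $Q_0$ bilinear estimate \cite{KM1,KS1,KS2}; its proof rests on the elementary symbol bound
\beno
	|\tau_1\tau_2-\xi_1\!\cdot\!\xi_2|\ \lesssim\ \langle|\tau|-|\xi|\rangle+\langle|\tau_1|-|\xi_1|\rangle+\langle|\tau_2|-|\xi_2|\rangle,\qquad\tau=\tau_1+\tau_2,\ \ \xi=\xi_1+\xi_2,
\eeno
which trades the derivative loss inherent in $Q_0$ against one of the three modulation weights, after which one closes by $L^4_{t,x}$--$L^4_{t,x}$--$L^2_{t,x}$ duality and the $X_{\sigma,b}$ Strichartz/Sobolev embeddings available for $s>\tfrac n2$ in dimensions $2$ and $3$; the implied constant may depend on $s$, which is permitted here.

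\emph{Conclusion, and the main obstacle.} It remains to check that the fixed point solves the original system. One verifies that $r:=\dv\bU+\mathfrak q(\na\bU)$ satisfies the homogeneous wave equation $\square r=0$ --- which is exactly why the pressure was defined through \eqref{pl1} --- with zero Cauchy data, the latter being the compatibility conditions inherent in the constraint $\det(I+\na\bU)=1$; hence $r\equiv0$, i.e.\ $\det(I+\na\bU)=1$, so $\bU$ solves \eqref{aa1}--\eqref{aa2}. Uniqueness, continuous dependence on the data, and the energy bound $\|\bU\|_{L^\infty_{[0,1]}H^{s+1}}+\|\frac{\partial\bU}{\partial t}\|_{L^\infty_{[0,1]}H^{s}}\lesssim\eta$ then follow from the contraction estimates and facts (i)--(ii). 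Everything delicate is concentrated in \eqref{bile}, in two respects: first, confirming that the genuinely nonlocal pressure operator $\p_i(\Delta-\mathcal L)^{-1}$ neither costs a derivative nor spoils the null cancellation --- this is where the equivalence of the Lagrangian and Eulerian norms and the Calder\'on--Zygmund (Mihlin) nature of the composed multiplier are used, and where the low regularity $s>\tfrac n2$ (at which $\na\bU$, but not $\na^2\bU$, is bounded) makes the analysis of the variable-coefficient pieces subtle; and second, the $Q_0$ bilinear estimate at the scaling-critical number of derivatives, which is precisely what forces one to stay strictly above $s=\tfrac n2$ and to argue through modulation weights rather than by energy methods --- indeed, before the null structure is exploited, $\mathcal N(\bU)$ loses one derivative relative to $H^{s+1}$, so no energy estimate can close.
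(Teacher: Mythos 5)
Your proposal follows essentially the same route as the paper: reduce to a wave--elliptic system whose source is a $Q_0$ null form of first derivatives of $\bU$, and close a contraction in an $X_{s,b}$-type space on the unit time slab using the Klainerman--Machedon/Klainerman--Selberg $Q_0$ bilinear estimate, with smallness of $\eta$ replacing smallness of the existence time. The paper iterates on $\bG=\bF-\bE=\nabla_y\bU$ rather than on $\bU$ itself, and writes the elliptic equation with the Eulerian Laplacian, $\Delta_x p=(\bF^{-1})^{kl}(\bF^{-1})^{mj}Q_0(G^{mk},G^{jl})$ as in \eqref{p0}, but those are bookkeeping differences; your constraint-propagation check at the end is a harmless (indeed welcome) addition. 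The one substantive place where your sketch is not yet a proof is the pressure operator. The operator $\partial_i(\Delta-\mathcal L[\nabla\bU])^{-1}\partial_j$ is \emph{not} a Fourier multiplier --- its coefficients are variable --- so it cannot be ``discarded'' by a Mihlin argument; only the leading piece $\partial_i\Delta^{-1}\partial_j$ is a genuine Calder\'on--Zygmund multiplier. The paper's substitute is the chain-rule identity \eqref{g2}, which writes $\Delta_x p-\Delta_y p$ as $O(\bG)\cdot\partial_y^2 p+\partial_y p\cdot\partial_y\bF^{-1}$ and absorbs the second-order perturbation for $|\bG|_{s,\theta}\lesssim\eta$ small. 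Crucially, the remaining first-order term $\|\partial_y p\|_{0,\theta-1}$ is not controlled by $\|\Delta_y p\|_{s-1,\theta-1}$ at low frequencies (the inverse Laplacian gains only one derivative homogeneously), and the paper needs a separate device --- the estimates \eqref{pd0}--\eqref{pd2}, which go back to the Eulerian form \eqref{in13} of the pressure equation and use $\dot B^0_{2,1}\hookrightarrow L^2$, interpolation, and the Lagrangian/Eulerian norm equivalence of Proposition \ref{LD4} --- to bound it quadratically in $\bG$. You correctly flag this as the delicate point but do not supply the argument; everything else (the bilinear estimate, the iteration scheme, the energy bound) matches the paper's proof.
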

\begin{remark}
In Theorem \ref{thmb}, the regularity of initial data reaches nearly the critical exponent $\frac{n}{2}$.
\end{remark}
Our third result is about Strichartz estimates of incompressible elastic equations.
\begin{theorem}\label{thm2}
	Let $n=2,3$ and $s>s_0(n)$, where $s_0(n)$ is stated in \eqref{sn}. Then there exists a positive number $T_*$ (depending on $s$ and $\|(\bU_0,\bv_0)\|_{H^{s+1}\times H^s}$) such that the Cauchy problem of \eqref{aa1}-\eqref{aa2} is locally well-posed on $[0,T_*]\times \mathbb{R}^n$ for data $(\bU_0,\bv_0)$ in the space $ H^{s+1}(\mathbb{R}^n) \times H^s(\mathbb{R}^n)$. Moreover, the following energy estimates 
		\begin{equation*}
		\begin{split}
			& \|{\bU}\|_{L^\infty_{[0,T_*]}H^{s+1}}+\|\frac{\partial \bU}{\partial t}\|_{L^\infty_{[0,T_*]}H^{s}}  \leq C ( \|\bU_0\|_{H^{s+1}} + \|\bv_0\|_{H^s}),
		\end{split}	
	\end{equation*}
	and Strichartz estimates
	\begin{equation}\label{thm20}
	\begin{split}
	& \| d {\bF}, d {\bv}\|_{L^4_{[0,T_*]}L^\infty_x} \leq C ( \|\bU_0\|_{H^{s+1}} + \|\bv_0\|_{H^s}), \quad n=2,
	\\
	& \| d {\bF}, d {\bv}\|_{L^2_{[0,T_*]}L^\infty_x} \leq C ( \|\bU_0\|_{H^{s+1}} + \|\bv_0\|_{H^s}), \quad n=3,
	\end{split}	
	\end{equation}
	hold. Above, the operator $d=(\frac{\partial}{\partial t}, \frac{\partial}{\partial y^1},\frac{\partial}{\partial y^2},\cdots,\frac{\partial}{\partial y^n})^{\mathrm{T}}$.
\end{theorem}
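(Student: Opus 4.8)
The plan is to treat Theorem~\ref{thm2} as the ``classical Strichartz regime'' companion of Theorems~\ref{thm}--\ref{thmb}: once $s>s_0(n)$ the dispersive estimates for the flat wave operator are already strong enough to run a contraction argument directly in a Strichartz-augmented energy space, with no need for the $X_{s,b}$ bilinear machinery. First I would invoke the wave--elliptic reduction established in the earlier sections: in the Lagrangian frame $\bU$ satisfies $\square_y\bU=\mathcal{N}(\bU)$ with $\square_y=\partial_t^2-\Delta_y$, and, differentiating, the deformation gradient $\bF=I+\nabla_y\bU$ and the velocity $\bv=\partial_t\bU$ satisfy $\square_y\bF=\mathcal{N}_1$, $\square_y\bv=\mathcal{N}_2$; the right-hand sides are quadratic in $(\bU,\bF,\bv)$ and carry a nonlocal pressure term built from the elliptic relation obtained by applying $\dv$ to the momentum equation and using $\dv\bar{\bv}=\dv\bar{\bF}=0$, which (passing to the Eulerian picture by composition with the $H^{s+1}$-diffeomorphism $\bx=\by+\bU$) reads $\bar{p}=R_mR_n(\bar{v}^m\bar{v}^n-\bar{F}^{mb}\bar{F}^{nb})$ with $R_i$ the Riesz transforms; the essential structural feature supplied by the earlier sections is that, after this reduction, $\mathcal{N}$, $\mathcal{N}_1$, $\mathcal{N}_2$ are of ``$Q_0$-type'' (wave--map type) null form. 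All estimates are carried out in the Lagrangian picture, in which \eqref{thm20} is stated.

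Next I would fix $q=4$ for $n=2$ and $q=2$ for $n=3$ --- the smallest wave-admissible time exponents paired with $L^\infty_x$ --- and seek a fixed point for small $T_*$ in the space $X_{T_*}$ of maps $\bU$ with $\|\bU\|_{L^\infty_{[0,T_*]}H^{s+1}}+\|\partial_t\bU\|_{L^\infty_{[0,T_*]}H^{s}}+\|d\bF,d\bv\|_{L^q_{[0,T_*]}L^\infty_x}<\infty$. Two families of linear estimates for $\square_y\phi=g$ are needed. The energy inequality gives $\|\phi\|_{L^\infty_tH^{\sigma}}+\|\partial_t\phi\|_{L^\infty_tH^{\sigma-1}}\lesssim\|\phi(0)\|_{H^{\sigma}}+\|\partial_t\phi(0)\|_{H^{\sigma-1}}+\|g\|_{L^1_tH^{\sigma-1}}$, applied with $\sigma=s$ to $\bF,\bv$ (hence $\sigma=s+1$ for $\bU$); note that the forcing is only asked for in $L^1_tH^{s-1}$. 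The Strichartz inequality, obtained from the Keel--Tao bounds \cite{KeelTao} together with the $\varepsilon$-loss form of the forbidden endpoint $(q,\infty)$ (summing the single-frequency endpoint estimate against $2^{-j\varepsilon}$), gives $\|d\psi\|_{L^q_tL^\infty_x}\lesssim\|\psi(0)\|_{H^{s_0(n)+\varepsilon}}+\|\partial_t\psi(0)\|_{H^{s_0(n)-1+\varepsilon}}+\|g\|_{L^1_tH^{s_0(n)-1+\varepsilon}}$ for $\square_y\psi=g$; since $d\bF,d\bv$ are one derivative of $\bF,\bv$ and $\bF(0),\bv(0)\in H^s$, this is exactly why $s>s_0(n)$ enters, and on the source side the requirement $g\in L^1_tH^{s_0(n)-1+\varepsilon}$ is implied by $g\in L^1_tH^{s-1}$ for $\varepsilon$ small.

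The nonlinear step is to prove, with $P$ a polynomial and some $\theta>0$, that $\|\mathcal{N},\mathcal{N}_1,\mathcal{N}_2\|_{L^1_{[0,T_*]}H^{s-1}}\lesssim T_*^{\theta}P(\|\bU\|_{X_{T_*}})\|\bU\|_{X_{T_*}}$ together with its Lipschitz counterpart for differences. The transport- and elastic-type quadratic terms, such as $\bar{v}^j\partial_j\bar{F}^{ia}$ and $\bar{F}^{kb}\partial_k\bar{F}^{ib}$, already lie in $H^{s-1}$ by the Moser/product estimates since $s>\tfrac n2$; placing one factor in the $L^q_tL^\infty_x$ norm and the other in $L^{q'}_tH^{s-1}$ produces the gain $T_*^{\theta}$, and composition with the smooth matrix map $\bF\mapsto\bF^{-1}$ is absorbed by Moser's inequality. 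The delicate term is the pressure contribution: two derivatives falling on $\bar{p}=R_mR_n(\text{quadratic in }\bF,\bv)$ would naively cost two derivatives relative to $H^{s-1}$, and it is precisely here that the ``$Q_0$-type'' null structure is used --- once $\dv\bar{\bv}=\dv\bar{\bF}=0$ are imposed, the pressure double-derivatives recombine with the elastic-force terms into genuine null forms, which, via the identity $Q_0(f,g)=\tfrac12\big(\square(fg)-f\square g-g\square f\big)$ and the Riesz-transform calculus, can be rearranged to sit at the $H^{s-1}$ level; this structural gain is the input from the earlier sections and the main obstacle of the argument. Granting it, Banach's fixed point theorem yields the solution on $[0,T_*]\times\mathbb{R}^n$ with $T_*=T_*(s,\|(\bU_0,\bv_0)\|_{H^{s+1}\times H^s})$, the stated energy bound is the energy inequality evaluated on the solution, and \eqref{thm20} is the Strichartz inequality evaluated on it, while continuous dependence and persistence of higher regularity follow from the usual difference estimates and bootstrap arguments. (For $n=2$ the well-posedness part alternatively follows from Theorem~\ref{thm} and persistence of regularity, but the Strichartz bound \eqref{thm20} needs the dispersive estimate in every case, and for $n=3$ with large data the construction above is carried out from scratch.)
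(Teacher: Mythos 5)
Your overall strategy is the same as the paper's: reduce to the wave--elliptic system, apply the (forbidden-endpoint, $\varepsilon$-loss) Keel--Tao estimate to $\square\bG=\frac{\partial}{\partial y^a}(\frac{\partial p}{\partial x^i})$, bound the quadratic source by placing one factor in $L^q_{[0,T_*]}L^\infty_x$ and the other in the energy norm so that H\"older in time yields a factor $T_*^{\theta}$, and close. The paper implements this by continuous induction on the a priori bounds \eqref{X2}--\eqref{X3} rather than by a contraction in a Strichartz-augmented space, but that difference is cosmetic. Two points of substance where you diverge. First, you misidentify the pressure term as the place where the $Q_0$ null structure must be exploited, and you propose to use $Q_0(f,g)=\frac12(\square(fg)-f\square g-g\square f)$ to ``recombine'' derivatives. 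In the regime $s>s_0(n)$ no null-form cancellation is needed and the paper uses none: since $\Delta_x p=(\bF^{-1})^{kl}(\bF^{-1})^{mj}Q_0(G^{mk},G^{jl})$ (see \eqref{t00}), elliptic regularity gives $\|\partial_x^2 p\|_{H^{s-1}}\lesssim\|\Delta_x p\|_{H^{s-1}}$, and $Q_0(\bG,\bG)$ is then estimated as a \emph{generic} product of first derivatives, $\|\partial\bG\cdot\partial\bG\|_{H^{s-1}}\lesssim\|d\bG\|_{L^\infty_x}\|d\bG\|_{H^{s-1}}$, exactly matching the Strichartz/energy splitting (this is \eqref{X5} and \eqref{X10}). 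Your identity-based rearrangement is therefore unnecessary, and as stated it is also unsubstantiated: applying it inside $\Delta_x^{-1}$ against the variable coefficients $\bF^{-1}\bF^{-1}$ reintroduces $\square(GG)$ and $G\,\square G=G\,\partial_y\partial_x p$ terms that you would still have to close. Second, you gloss over the genuine technical point the paper does address: the elliptic inversion lives in the $\bx$-variables while the wave equation and all norms live in the $\by$-variables, and transferring $H^{s-1}$ bounds across the volume-preserving map $\bx=\by+\bU$ requires the norm-equivalence of Proposition \ref{LD4} (used in \eqref{t4}); ``composition with the $H^{s+1}$-diffeomorphism'' needs this lemma to be more than a gesture. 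With the null-form detour deleted and the change-of-variables lemma supplied, your argument closes and is essentially the paper's proof.
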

\begin{remark}
	In Theorem \ref{thm2}, we prove the Strichartz estimate of solutions for Cauchy problem \eqref{aa1}-\eqref{aa2}. Based on the wave-elliptic structure, so we don't need an additional assumption on the vorticity compared with \cite{AK}. Moreover, based on Andersson-Kapitanski's result \cite{AK}, if the Strichartz estimates hold,
	then the flow map can preserve $H^{s+1}$-diffeomorphisms, and the inverse
	map has the same regularity. Therefore, the solution can go back in Euler picture.
\end{remark}

\subsection{Notations}
If f and g are two functions, we say $f \lesssim g$ if and only if there exists a constant
$C>0$ such that $f\leq C g$. We say $f\approx g$ if and only if there exits a constant $C_1, C_2>0$ such that $ C_1 f \leq g \leq C_2 f$. The constant $C$ may change from line to line.

Fourier transforms on $\mathbb{R}^{1+n}$ are denoted by $\hat{\cdot}$ and $\tilde{\cdot}$:
\begin{equation*}
\hat{f}(\xi)=\int_{\mathbb{R}^n} \text{e}^{\text{i}\bx\cdot \xi}f(\bx)d\bx, \quad 	\tilde{F}(\tau,\xi)=\int_\mathbb{R} \int_{\mathbb{R}^n} \text{e}^{\text{i}(t\tau+\bx\cdot \xi)}F(t,\bx)d\bx dt.
\end{equation*}

For $s, \theta \in \mathbb{R}$, let $H^{s,\theta}$ be the space\footnote{The space $\mathcal{S}'(\mathbb{R}^{1+n})$ is the dual space of Schwartz functions.}
\begin{equation*}
	H^{s,\theta}=\left\{ u\in \mathcal{S}'(\mathbb{R}^{1+n}): \left< \xi \right>^s \left<||\tau|-|\xi||\right>^{\theta} \tilde{u}(\tau,\xi) \in L^2(\mathbb{R}^{1+n}) \right\},
\end{equation*}
where we set $\left< \xi \right>=1+|\xi|$ and  $\left<||\tau|-|\xi||\right>=1+ ||\tau|-|\xi||$. For $\theta>\frac12$, then $H^{s,\theta} \hookrightarrow H^s$(c.f. \cite{Selberg}). We use the notation $\|f\|_{s,\theta}$ to denote a norm in $H^{s,\theta}$, that is
\begin{equation}\label{HS1}
	\|u\|_{s,\theta}=\| \left< \xi \right>^s \left<||\tau|-|\xi||\right>^{\theta} \tilde{u}(\tau,\xi) \|_{ L^2(\mathbb{R}^{1+n}) }.
\end{equation}
We also introduce a norm
\begin{equation}\label{HS2}
|f|_{s,\theta}=\|f\|_{s,\theta}+ \|\partial_t f\|_{s-1,\theta}.
\end{equation}

The operator $\Lambda$ and $D$ are two "elliptic" operators on $\mathbb{R}^n$, and $\Lambda_{-}$ is a "elliptic" operator on $\mathbb{R}^{1+n}$, which are denoted by
\begin{equation*}
	\begin{split}
		&\widehat{\Lambda^\alpha f}(\xi)=\left< \xi \right>^\alpha \hat{f}(\xi),
		\\
		& \widehat{D^\alpha f}(\xi)=|\xi|^\alpha \hat{f}(\xi),
		\\
		&\widehat{\Lambda_{\pm}^\alpha F}(\tau,\xi)=\left<||\tau|\pm|\xi||\right>^\alpha \tilde{F}(\tau,\xi),
	\end{split}
\end{equation*}
The space $H^s(\mathbb{R}^n)$ is defined by
\begin{equation*}
	H^{s}(\mathbb{R}^n)=\left\{ u\in \mathcal{S}'(\mathbb{R}^{n}): \left< \xi \right>^s  \hat{u}(\xi) \in L^2(\mathbb{R}^{n}) \right\}.
\end{equation*}
Set a cut-off function $\chi$ satisfying
\begin{equation}\label{chi}
	\chi \in C^\infty_c(\mathbb{R}), \quad \chi=1 \ \text{on} \ [-1,1], \quad \text{supp} \chi \subseteq (-2,2).
\end{equation}
We also set another cut-off function $\phi$ satisfying
\begin{equation}\label{phi}
 	\phi \in C^\infty_c(\mathbb{R}), \quad \phi=1 \ \text{on} \ [-2,2], \quad \text{supp} \phi \subseteq (-4,4).
\end{equation}
\subsection{Organization of the paper}
In Section 2, we introduce a reduction of system \eqref{aa1} and some basic lemmas and propositions. Section 3 presents the proof of Theorem \ref{thm} and Theorem \ref{thm2}.

\section{Preliminaries}
In this part, we will introduce some preliminaries, such as, a reduction of system \eqref{aa1}, some useful lemmas and propositions. These play an important role in our paper.
\subsection{Reformulations of system \eqref{aa1}}

Operating divergence on \eqref{in11}, then we get
\begin{equation}\label{in12}
	-\Delta_x \bar{p}=\text{div}\{ \partial_t \bar{v}^i + \bar{v}^j \partial_j \bar{v}^i- \bar{F}^{kb}\partial_k \bar{F}^{ib} \} .
\end{equation}
Combining \eqref{in7} and \eqref{in10}, \eqref{in12} becomes to
\begin{equation}\label{in13}
	\Delta_x \bar{p}=-\partial_i \bar{v}^m \partial_m \bar{v}^i+ \partial_i \bar{F}^{kb} \partial_k \bar{F}^{ib}.
\end{equation}
We record respectively $\bv$ and $\bF$ being the formulation of $\bar{\bv}$ and $\bar{\bF}$ in Lagrangian coordinates, that is
\begin{equation}\label{in15}
	\bv(t,\by)=\bar{\bv}(t,\bx(t,\by)), \qquad \bF(t,\by)=\bar{\bF}(t,\bx(t,\by)).
\end{equation}
Therefore, in Lagrangian coordinates, \eqref{in13} is transformed to
\begin{equation}\label{A01}
	\begin{split}
		\Delta_x p=& - \frac{\partial v^m}{\partial y_k} (\bF^{-1})^{ki} \cdot \frac{\partial v^i}{\partial y_j} (\bF^{-1})^{jm}
		+ \frac{\partial F^{im}}{\partial y_k} (\bF^{-1})^{kl} \cdot \frac{\partial F^{il}}{\partial y_j} (\bF^{-1})^{jm}
		\\
		=& (\bF^{-1})^{kl} (\bF^{-1})^{mj} \left( - \frac{\partial v^m}{\partial y_k}  \frac{\partial v^l}{\partial y_j}
		+ \frac{\partial F^{im}}{\partial y_k} \frac{\partial F^{il}}{\partial y_j} \right).
	\end{split}
\end{equation}

On the other hand, we also note
\begin{equation}\label{A02}
	\begin{split}
		\frac{\partial v^i}{\partial y_j} =\frac{\partial }{\partial y_j} ( \frac{\partial x^i}{\partial t} )=\frac{\partial }{\partial t} ( \frac{\partial x^i}{\partial y_j} )=\frac{\partial F^{ij}}{\partial t} .
	\end{split}
\end{equation}
Using \eqref{A02}, we can calculate \eqref{A01} by
\begin{equation}\label{a03}
	\begin{split}
		\Delta_x p=& (\bF^{-1})^{kl}(\bF^{-1})^{mj} \left( - \frac{\partial v^m}{\partial y_k}  \frac{\partial v^l}{\partial y_j}
		+ \frac{\partial F^{im}}{\partial y_k} \frac{\partial F^{il}}{\partial y_j} \right)
		\\
		=& (\bF^{-1})^{kl}(\bF^{-1})^{mj} \left( - \frac{\partial F^{mk}}{\partial t}  \frac{\partial F^{jl}}{\partial t}
		+  \frac{\partial F^{mk}}{\partial y_i} \frac{\partial F^{jl}}{\partial y_i} \right).
	\end{split}
\end{equation}
Taking derivatives $\frac{\partial }{\partial y^a}$ on \eqref{in4}, and using \eqref{in8}, \eqref{in15}, we then get
\begin{equation}\label{a04}
	-\frac{\partial^2 F^{ia}}{\partial t^2} + \frac{\partial^2 F^{ia}}{\partial y^b \partial y^b} =\frac{\partial }{\partial y^a} ( \frac{\partial p}{\partial x^i} ).
\end{equation}
Using \eqref{in8} and \eqref{in15} again, we obtain
\begin{equation}\label{a05}
	\text{det}(\frac{\partial \bx}{\partial \by})=1, \quad \Leftrightarrow 	\quad \text{det}\bF=1.
\end{equation}
Using \eqref{a05}, we can calculate
\begin{equation}\label{a06}
	(\bF^{-1})^{ia} = \frac{1}{(n-1)!} \epsilon_{i i_2 i_3 \cdots i_n }\epsilon^{a a_2 a_3 \cdots a_n } F^{i_2 a_2} F^{i_3 a_3} \cdots F^{i_n a_n}.
\end{equation}

After above computations, we
have reformulated \eqref{aa1} to a system of a wave-elliptic equation as follows:
\begin{equation}\label{wavep}
	\begin{cases}
		& \square U^i=\frac{\partial p}{\partial x^i} ,
		\\
		& \square F^{ia}=\frac{\partial }{\partial y^a} ( \frac{\partial p}{\partial x^i} ) ,
		\\
		& \text{det}\bF=1,
		\\
		&  \Delta_x p=(\bF^{-1})^{kl}(\bF^{-1})^{mj} \left( - \frac{\partial F^{mk}}{\partial t}  \frac{\partial F^{jl}}{\partial t}
		+ \frac{\partial F^{mk}}{\partial y_i} \frac{\partial F^{jl}}{\partial y_i} \right).
	\end{cases}
\end{equation}
where we define the operator $\square= -\frac{\partial^2}{\partial t^2}+  \Delta_y$, and $\Delta_y=\frac{\partial^2 }{\partial y^b \partial y^b}$, and $\Delta_x=\textstyle{\sum}_{i=1}^3 \partial^2_i$.

To be clear, we define the martrix $\bG$ by
\begin{equation}\label{a07}
	\bG=\bF- \bE,
\end{equation}
where $\bE$ is the $n\times n$ identity matrix. Inserting \eqref{a07} to \eqref{a06}, we can see
\begin{equation}\label{a15}
		\begin{split}
		(\bF^{-1})^{ia} = & \frac{1}{(n-1)!} \epsilon_{i i_2 i_3 \cdots i_n }\epsilon^{a a_2 a_3 \cdots a_n } F^{i_2 a_2}  \cdots F^{i_n a_n}
			\\
			=& \frac{1}{(n-1)!} \epsilon_{i i_2 i_3 \cdots i_n }\epsilon^{a a_2 a_3 \cdots a_n } (\delta^{i_2 a_2}+ G^{i_2 a_2}) \cdots (\delta^{i_n a_n}+ G^{i_n a_n}).
		\end{split}
\end{equation}
By \eqref{a15}, we can conclude
\begin{equation}\label{a10}
	\begin{split}
		| \bF^{-1} | \leq C(1+ | \bG | + | \bG |^2+ \cdots + | \bG |^{n-1} ).
	\end{split}
\end{equation}

Inserting \eqref{a07} to \eqref{wavep}, then $\bG$ satisfies 
\begin{equation}\label{a08}
	\begin{cases}
		& \square G^{ia}=\frac{\partial }{\partial y^a} ( \frac{\partial p}{\partial x^i} ) ,
		\\
		& \Delta_x p=(\bF^{-1})^{kl}(\bF^{-1})^{mj} \cdot Q_0(G^{mk}, G^{jl}) ,
		\\
		& \text{det}\bF=1.
	\end{cases}
\end{equation}

The initial condition \eqref{aa2} also tells us
\begin{equation}\label{a09}
	\begin{split}
		& G^{ia}|_{t=0}=\frac{\partial U^{ia}}{\partial y^a}|_{t=0}=\frac{\partial U_0^{ia}}{\partial y^a},
		\\
		& \frac{\partial G^{ia}}{\partial t}|_{t=0}=\frac{\partial}{\partial t}\left( \frac{\partial U^{i}}{\partial y^a} \right)|_{t=0}=\frac{\partial}{\partial y^a}\left( \frac{\partial U^{i}}{\partial t} \right)|_{t=0}=\frac{\partial v_0^{i}}{\partial y^a}.
	\end{split}
\end{equation}
Next, let us introduce some useful product estimate and commutator estimates, which plays an important role in the paper.
\subsection{Useful lemmas}
\begin{Lemma}[\cite{Selberg},Theorem 12, Theorem 13]\label{nonlinearE}
Assume $s\in \mathbb{R}$, $\theta \in (\frac12,1)$, $\varepsilon \in [0,1-\theta]$. Consider the Cauchy problem for the linear wave equation
\begin{equation}\label{linearw}
	\begin{cases}
	& \square u=F, \quad (t,x)\in \mathbb{R}^{1+n},
	\\
	& u|_{t=0} =f, \quad  \partial_t u|_{t=0} = g.
	\end{cases}
\end{equation}
Let $f,g$ and $F$ satisfy $f\in H^s$, $g\in H^{s-1}$,  and $F\in H^{s-1,\theta+\varepsilon-1}$. Let $\chi$ and $\phi$ be stated in \eqref{chi} and \eqref{phi}. Let $0<T<1$ and define
\begin{equation}\label{defu}
  u(t)=\chi(t)u_0+ \chi(\frac{t}{T})u_1+u_2,
\end{equation}
where
\begin{equation}\label{defu0}
 \begin{split}
 	u_0= &\cos(tD)f +  D^{-1} \sin(tD)g,
 \\
 	F_1= & \phi( T^{\frac12}\Lambda_{-} )F, \quad F_2= ( 1-\phi( T^{\frac12}\Lambda_{-} ) )F,
 	\\
 	u_1= & \int^t_0  D^{-1} \sin( (t-t')D ) F_1(t')dt',
 	\\
 	u_2= & \square^{-1} F_2,
 \end{split}
\end{equation}
Then, the function $u$ defined in \eqref{defu}-\eqref{defu0} satisfies the following estimate
\begin{equation}\label{none1}
|u|_{s,\theta} \leq C(\|f\|_{H^s}+ \|g\|_{H^{s-1}}+ T^{\frac{\varepsilon}{2}}\| F\|_{s-1,\theta+\varepsilon-1} ),
\end{equation}
where $C$ only depends on $\chi$ and $\theta$. Moreover, $u$ is the unique solution of \eqref{linearw} on $[0,T]\times \mathbb{R}^n$ such that $u\in C([0,T];H^s) \cap C^1([0,T];H^{s-1})$.
\end{Lemma}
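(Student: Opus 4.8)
\emph{Plan.} Lemma~\ref{nonlinearE} is the standard energy/regularity estimate for the inhomogeneous wave equation in Bourgain-type spaces $H^{s,\theta}$, and I would reproduce the classical argument (as in Klainerman--Selberg, cf.\ \cite{KS2,Selberg}). Since $|\cdot|_{s,\theta}$ is a norm, $|u|_{s,\theta}\le|\chi(t)u_0|_{s,\theta}+|\chi(t/T)u_1|_{s,\theta}+|u_2|_{s,\theta}$, so it suffices to bound the three summands of \eqref{defu} separately: the free evolution $\chi(t)u_0$ will produce $\|f\|_{H^{s}}+\|g\|_{H^{s-1}}$, while the high-modulation (``elliptic'') piece $u_2=\square^{-1}F_2$ and the Duhamel (low-modulation) piece $\chi(t/T)u_1$ will each produce $T^{\varepsilon/2}\|F\|_{s-1,\theta+\varepsilon-1}$. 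Throughout one uses that, with the Fourier convention of the paper, $\square$ acts as the multiplier $\tau^{2}-|\xi|^{2}$, that $|\tau^{2}-|\xi|^{2}|=||\tau|-|\xi||\,(|\tau|+|\xi|)$, that $\Lambda_-$ is the multiplier $\langle||\tau|-|\xi||\rangle$, and that $||\tau|-|\xi||\le|\tau\mp|\xi||$ for either choice of sign.

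\emph{Free and elliptic parts.} For $\chi(t)u_0$ I would compute the space--time Fourier transform directly: decomposing $\cos(tD)$ and $D^{-1}\sin(tD)$ into half-waves $e^{\pm itD}$ one has $\widetilde{\chi(t)e^{\pm itD}h}(\tau,\xi)=\hat\chi(\tau\mp|\xi|)\hat h(\xi)$ up to constants, and since $\chi$ is Schwartz, $\int_{\mathbb{R}}\langle||\tau|-|\xi||\rangle^{2\theta}|\hat\chi(\tau\mp|\xi|)|^{2}\,d\tau\le\int_{\mathbb{R}}\langle\sigma\rangle^{2\theta}|\hat\chi(\sigma)|^{2}\,d\sigma<\infty$; integrating in $\xi$ against $\langle\xi\rangle^{2s}|\hat h(\xi)|^{2}$ gives $\|f\|_{H^{s}}$ and $\|g\|_{H^{s-1}}$, and the $\partial_t$-term in $|\cdot|_{s,\theta}$ is of the same type (either $\partial_t$ hits $\chi$, at no cost, or it produces a factor $D$, absorbed by the drop $H^{s}\!\to\!H^{s-1}$); this step uses neither smallness of $T$ nor any structure of $F$. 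For $u_2=\square^{-1}F_2$, note by \eqref{defu0} that $\tilde F_2$ is supported where $\phi(T^{1/2}\Lambda_-)\ne1$, i.e.\ where $\Lambda_->2T^{-1/2}>2$; there $|\tau|+|\xi|\ge||\tau|-|\xi||\gtrsim\Lambda_-$, so $|\tau^{2}-|\xi|^{2}|\gtrsim\Lambda_-(|\tau|+|\xi|)$, $\square^{-1}$ is an honest multiplier, and $|\widetilde{u_2}|\lesssim\Lambda_-^{-1}(|\tau|+|\xi|)^{-1}|\tilde F|$. Using $\langle\xi\rangle(|\tau|+|\xi|)^{-1}\lesssim1$ and $|\tau|(|\tau|+|\xi|)^{-1}\le1$, both $\langle\xi\rangle^{s}\Lambda_-^{\theta}|\widetilde{u_2}|$ and $\langle\xi\rangle^{s-1}\Lambda_-^{\theta}|\tau|\,|\widetilde{u_2}|$ are $\lesssim\langle\xi\rangle^{s-1}\Lambda_-^{\theta-1}|\tilde F|$; and on $\mathrm{supp}\,\tilde F_2$ one has $\Lambda_-^{\theta-1}=\Lambda_-^{-\varepsilon}\Lambda_-^{\theta+\varepsilon-1}\lesssim T^{\varepsilon/2}\Lambda_-^{\theta+\varepsilon-1}$, whence $|u_2|_{s,\theta}\lesssim T^{\varepsilon/2}\|F\|_{s-1,\theta+\varepsilon-1}$.

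\emph{The Duhamel part --- the main obstacle.} Here $F_1=\phi(T^{1/2}\Lambda_-)F$ is localized to modulation $\Lambda_-\lesssim T^{-1/2}$, and $u_1(t)=\int_0^{t}D^{-1}\sin((t-t')D)F_1(t')\,dt'$ genuinely solves the inhomogeneous equation with zero data. Splitting the kernel into half-waves and using $\int_0^{t}e^{i(t-t')\lambda}a(t')\,dt'=e^{it\lambda}\int_0^{t}e^{-it'\lambda}a(t')\,dt'$, the estimate reduces --- after freezing the spatial frequency $\xi$ (with $\lambda=|\xi|$), translating $\tau$ by $|\xi|$, and observing that the resulting datum is band-limited in time to $\{|\tau|\lesssim T^{-1/2}\}$ --- to a one-dimensional bound for the truncated antiderivative $b\mapsto\chi(t/T)\int_0^{t}b(t')\,dt'$. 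This operator does \emph{not} map $L^{2}_t$ into $H^{\theta}_t$ for $\theta>\tfrac12$ in general, but on functions band-limited to $\{|\tau|\lesssim T^{-1/2}\}$ it gains a positive power of $T$ (e.g.\ $\|\chi(t/T)\int_0^{t}b\|_{H^{\theta}_t}\lesssim T^{5/4-\theta}\|b\|_{L^{2}_t}$, via Bernstein in time and the explicit localization at time-scale $T$). Feeding this back gives $\|\chi(t/T)u_1\|_{s,\theta}\lesssim T^{5/4-\theta}\|F_1\|_{s-1}$ (the $D^{-1}\sin((t-t')D)$ costs one spatial derivative at high frequency, matching the drop $H^{s}\!\to\!H^{s-1}$; low frequencies, where that operator is bounded, are harmless). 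One then trades weights for powers of $T$: since $\Lambda_-\lesssim T^{-1/2}$ on $\mathrm{supp}\,\widetilde{F_1}$ and $\theta+\varepsilon-1\le0$, $\|F_1\|_{s-1}\lesssim T^{(\theta+\varepsilon-1)/2}\|F_1\|_{s-1,\theta+\varepsilon-1}\lesssim T^{(\theta+\varepsilon-1)/2}\|F\|_{s-1,\theta+\varepsilon-1}$, so the net power of $T$ is $(5/4-\theta)+\tfrac{\theta+\varepsilon-1}{2}=\tfrac{3-2\theta+2\varepsilon}{4}\ge\tfrac{\varepsilon}{2}$ (using $\theta<3/2$), and $\|\chi(t/T)u_1\|_{s,\theta}\lesssim T^{\varepsilon/2}\|F\|_{s-1,\theta+\varepsilon-1}$; the hypothesis $\varepsilon\le1-\theta$ (equivalently $\theta+\varepsilon-1\le0$) is exactly what makes this bookkeeping close. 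The $\partial_t$-term is similar: if $\partial_t$ falls on $u_1$ one gets $\int_0^{t}\cos((t-t')D)F_1\,dt'$ (the $t'=t$ boundary term vanishes) and the same argument applies without the $D^{-1}$; if it falls on $\chi(t/T)$ one gets $T^{-1}\chi'(t/T)u_1$, and since $|u_1(t)|$ grows at most linearly in $|t|$, the factor $T^{-1}$ is absorbed on $\mathrm{supp}\,\chi'(t/T)\subset\{|t|\sim T\}$ against the $T$-gain. I expect this step to be the delicate point: the truncation $\int_0^{t}$ breaks time-translation invariance, so the half-waves are not Fourier multipliers, and one must track the interplay of the time-cutoff at scale $T$, the Duhamel kernel, and the modulation localization while extracting precisely $T^{\varepsilon/2}$ and not losing in $\varepsilon$ --- indeed, the absence of a useful $L^{2}\!\to\!H^{\theta}$ version of the truncated-antiderivative estimate is exactly why the frequency cutoffs $\phi(T^{1/2}\Lambda_-)$ are built into the construction.

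\emph{Continuity and uniqueness.} Since $\theta>\tfrac12$, $H^{s,\theta}\hookrightarrow C(\mathbb{R};H^{s})$ (cf.\ \cite{Selberg}), and likewise $\|\partial_t u\|_{s-1,\theta}<\infty$ gives $\partial_t u\in C(\mathbb{R};H^{s-1})$, so $u\in C([0,T];H^{s})\cap C^{1}([0,T];H^{s-1})$. On $[0,T]$ one has $\chi(t)=\chi(t/T)=1$ (as $T<1$), so the three pieces of \eqref{defu} add up to a solution of \eqref{linearw} with data $(f,g)$; uniqueness in this class is the classical uniqueness for the wave equation applied to the difference of two solutions.
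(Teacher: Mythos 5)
First, a point of reference: the paper itself gives no proof of Lemma \ref{nonlinearE}. It is quoted (with the decomposition \eqref{defu}--\eqref{defu0} built into the statement) from Selberg's thesis, Theorems 12 and 13, as the remark following the lemma makes explicit. Your reconstruction follows the same route as Selberg's original argument --- free part by computing the space--time Fourier transform of $\chi(t)e^{\pm itD}h$ and using that $\hat\chi$ is Schwartz; elliptic part $u_2$ by the multiplier bound on the region $\Lambda_-\gtrsim T^{-1/2}$ where $\square^{-1}$ is nonsingular and the factor $\Lambda_-^{-\varepsilon}\lesssim T^{\varepsilon/2}$ is extracted; Duhamel part by reduction to a one-dimensional estimate for the truncated primitive of a band-limited function --- so there is no methodological divergence to report; the only question is whether your sketch closes.

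Two points do not close as written. (i) In the Duhamel reduction you assert that, after splitting the kernel into half-waves and conjugating by $e^{\pm it|\xi|}$, the resulting one-dimensional datum is band-limited to $\{|\tau|\lesssim T^{-1/2}\}$. This is false in general: $\widetilde{F_1}$ is supported where $||\tau|-|\xi||\lesssim T^{-1/2}$, i.e.\ near \emph{both} sheets $\tau\approx\pm|\xi|$ of the cone, so at fixed $\xi$ the function $e^{\mp it'|\xi|}F_1(t',\xi)$ has temporal frequency support in $\{|\tau|\lesssim T^{-1/2}\}\cup\{|\tau\pm 2|\xi||\lesssim T^{-1/2}\}$; for $|\xi|\gg T^{-1/2}$ the second component is not band-limited and your Bernstein-plus-interpolation bound $T^{5/4-\theta}$ does not apply to it. One must further split $F_1=F_1^++F_1^-$ according to $\mathrm{sgn}\,\tau$ and treat the mismatched half-wave/sign pairs separately (there the time integral gains a factor $|\xi|^{-1}\ll T^{1/2}$ from the nonresonant oscillation, which compensates); this case analysis is precisely the technical content of Selberg's proof and cannot be skipped. (ii) Your final claim that ``the three pieces of \eqref{defu} add up to a solution of \eqref{linearw} with data $(f,g)$'' fails for the formula as written: $u_2=\square^{-1}F_2$ satisfies $u_2(0)\neq0$ and $\partial_t u_2(0)\neq0$ in general, so $u(0)=f+u_2(0)$. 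In Selberg's actual construction the homogeneous part carries the corrected data $(f-u_2(0),\,g-\partial_t u_2(0))$, and these corrections are absorbed into \eqref{none1} via the embedding $H^{s,\theta}\hookrightarrow C(\mathbb{R};H^s)$; the version transcribed in the statement drops this correction, and your argument inherits the discrepancy rather than repairing or flagging it. The estimate \eqref{none1} itself is unaffected by (ii), but the solvability and uniqueness claim is not established as you state it.
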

\begin{remark}
	For $\varepsilon=0$ or $\varepsilon\in (0,1-\theta]$, we refer the readers to Selberg's paper \cite{Selberg} Theorem 12 and Theorem 13 respectively. 
\end{remark}
Let $h$ and $w$ be a solution of
\begin{equation}\label{k1}
	\begin{cases}
			& \square h=0, \quad (t,x)\in \mathbb{R}^{1+n},
		\\
		& u|_{t=0} =f_1, \quad  \partial_t u|_{t=0} = g_1,
	\end{cases}
\end{equation}
and
\begin{equation}\label{k2}
	\begin{cases}
		& \square w=0, \quad (t,x)\in \mathbb{R}^{1+n},
		\\
		& u|_{t=0} =f_2, \quad  \partial_t u|_{t=0} = g_2.
	\end{cases}
\end{equation}
\begin{Lemma}[\cite{FK,KS2,Selberg}]\label{bilinearE}
	Assume $n\geq 2$ and $s>\frac{n}{2}$. Let $\theta \in (\frac12,1)$, $\varepsilon \in (0,1-\theta]$ and $\frac{n-1}{2}+\theta+\varepsilon < s$. Let $h$ and $w$ satisfy \eqref{k1}-\eqref{k2}. Then
	\begin{equation*}
	\|Q_0(h,w)\|_{s-1,\theta+\varepsilon-1} \lesssim |h|_{s,\theta}|w|_{s,\theta}.
	\end{equation*}
\end{Lemma}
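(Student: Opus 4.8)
The plan is to establish the bilinear estimate for $Q_0(h,w)$ by working on the Fourier side and exploiting the algebraic identity that makes $Q_0$ a genuine null form. First I would recall the symbol-level identity
\begin{equation*}
	Q_0(h,w) = \tfrac12\bigl(\square(hw) - (\square h)w - h(\square w)\bigr),
\end{equation*}
so that since $\square h = \square w = 0$ by \eqref{k1}--\eqref{k2}, one has $Q_0(h,w) = \tfrac12\square(hw)$. Passing to the space-time Fourier transform, the symbol of $Q_0$ acting on two free waves supported (up to the $\langle\cdot\rangle$-regularization) near the cones $|\tau_1| = |\xi_1|$, $|\tau_2| = |\xi_2|$ satisfies the classical null-form bound
\begin{equation*}
	|\tau_1\tau_2 - \xi_1\cdot\xi_2| \lesssim \bigl(\langle|\tau_1| - |\xi_1|\rangle + \langle|\tau_2| - |\xi_2|\rangle + \langle|\tau| - |\xi|\rangle\bigr)\,\langle\xi_1\rangle\langle\xi_2\rangle,
\end{equation*}
where $(\tau,\xi) = (\tau_1+\tau_2,\xi_1+\xi_2)$; this is the key gain of one derivative's worth of the modulation variable $\Lambda_{-}$, distributed among the three waves. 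I would prove this pointwise symbol inequality by the standard case analysis on the signs of $\tau_1,\tau_2$ (the $(+,+)$ and $(-,-)$ cases use $\cos$ of the angle between $\xi_1,\xi_2$ together with the triangle inequality on $|\tau|-|\xi|$; the $(+,-)$ cases are the easy ones where $|\tau_1\tau_2|$ and $|\xi_1\cdot\xi_2|$ nearly cancel against $|\tau| \pm |\xi|$).

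Next I would translate the estimate into an $L^2$-convolution bound. Writing $\|Q_0(h,w)\|_{s-1,\theta+\varepsilon-1}$ as a weighted $L^2$ norm of the convolution of $\tilde h$ and $\tilde w$, the symbol bound lets me replace the weight $\langle\xi\rangle^{s-1}\langle|\tau|-|\xi|\rangle^{\theta+\varepsilon-1}$ times $|\tau_1\tau_2-\xi_1\cdot\xi_2|$ by a sum of three terms, in each of which one of the three modulation weights $\langle|\tau_i|-|\xi_i|\rangle$ (or $\langle|\tau|-|\xi|\rangle$) appears to a positive power that offsets the negative power $\theta+\varepsilon-1$. After this reduction the estimate becomes a product estimate of the schematic shape
\begin{equation*}
	\bigl\|\Lambda^{s-1}(h_1 w_1)\bigr\|_{L^2_{t,x}} \lesssim \|h_1\|_{H^{s,\theta'}}\|w_1\|_{H^{s,\theta''}}
\end{equation*}
with $\theta',\theta'' \in \{\theta, \theta-1\}$ appropriately placed, which is handled by the $H^{s,\theta}$-algebra / product machinery of Foschi--Klainerman \cite{FK} and Klainerman--Selberg \cite{KS2}: one dyadically decomposes in the output frequency $\langle\xi\rangle$ and in the three modulations, uses the transfer principle to reduce to free solutions, and applies the $L^4_{t,x}$ (in dimension $n=2,3$) or more refined $L^2$ bilinear cone estimates. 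The conditions $s > \frac{n}{2}$ and $\frac{n-1}{2}+\theta+\varepsilon < s$ are exactly what make the relevant dyadic sums over frequencies and over modulations converge — the first controls the high-high to low-frequency interaction via Sobolev embedding, and the second is the sharp threshold for the cone-concentration (bilinear $L^2$) estimate after one derivative has been moved onto the modulation weight.

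The main obstacle I expect is the bookkeeping in the region of \emph{high-high frequency interaction producing low output frequency} ($\langle\xi_1\rangle \approx \langle\xi_2\rangle \gg \langle\xi\rangle$): there the factor $\langle\xi\rangle^{s-1}$ gives no help and one must extract all the smoothing from the null symbol together with the bilinear cone-restriction estimate, and it is precisely here that the hypothesis $\frac{n-1}{2}+\theta+\varepsilon < s$ is used at its sharp value. A secondary technical point is that the lemma is stated for genuine free waves $h,w$ (solutions of $\square h = \square w = 0$), so one should either invoke the version of the bilinear estimate already proved in \cite{FK, KS2, Selberg} for free solutions directly, or, if a version for general $H^{s,\theta}$ functions is wanted, first reduce to free waves via the transfer principle and the restriction to a time interval. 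Since the statement only claims the free-wave version, I would simply cite the relevant estimates in \cite{FK}, \cite{KS2}, \cite{Selberg} after performing the symbol decomposition above, checking that the exponent hypotheses here match theirs.
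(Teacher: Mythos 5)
Your proposal is correct in substance and terminates exactly where the paper does: the paper offers no proof of this lemma beyond the case split $\varepsilon\in(0,1-\theta)$ versus $\varepsilon=1-\theta$ and citations to \cite{Selberg} (Section 4.1.3), \cite{KS2} (Section 7.2) and \cite{FK} (Corollary 13.3), which is also your final step, and your sketch of the null-form symbol decomposition and bilinear cone machinery is a faithful outline of what those references actually do. One small caveat: the displayed symbol inequality is written in a lossy form --- the sharp null-form bound gains a full modulation factor at the cost of only \emph{one} frequency factor $\max(\langle\xi_1\rangle,\langle\xi_2\rangle)$ rather than the product $\langle\xi_1\rangle\langle\xi_2\rangle$ (as written the right-hand side trivially dominates the left and yields no gain) --- but since you defer the actual product estimates to the cited references this does not affect the conclusion.
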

\begin{remark}
	For $\varepsilon \in (0,1-\theta)$, please refer to Section 4.1.3 in \cite{Selberg} (see also Section 7.2 in \cite{KS2}). For $\varepsilon =1-\theta$, we refer the readers to \cite{FK} (cf. Corollary 13.3 in \cite{FK}). Then we obtain $\|Q_0(h,w)\|_{s-1,0} \lesssim \|h\|_{s,0}\|w\|_{s,0}$. Using the fact $H^{s,\theta}\subseteq H^s$ for $\theta \in (\frac12,1)$, then we get $\|Q_0(h,w)\|_{s-1,0} \lesssim |h|_{s,\theta}|w|_{s,\theta}$.
\end{remark}
\begin{Lemma}[\cite{Selberg},Proposition 10]\label{prodE}
	Let $s_j\geq 0$ and $\theta_j \geq 0$ for $1\leq j \leq 3$. If
	\begin{equation*}
	s_1+s_2+s_3 > \frac{n}{2}, \quad \theta_1+\theta_2+\theta_3 > \frac{1}{2},
	\end{equation*}
then we have
\begin{equation*}
	\| f_1 f_2 \|_{-s_1, -\theta_1} \leq C \| f_1 \|_{s_2, \theta_2} \| f_2 \|_{s_3, \theta_3}.
\end{equation*}
\end{Lemma}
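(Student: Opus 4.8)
\emph{Proof proposal.} The plan is to argue by duality and reduce everything to a single trilinear convolution estimate on the Fourier side, which is then dispatched by a dyadic decomposition together with the elementary bilinear $L^2$ bound coming from Young's inequality. First I would note that, since $H^{s,\theta}$ is a weighted $L^2$-space on the Fourier side, $L^2$-duality gives (for Schwartz $f_1,f_2$, the general case following by density)
\begin{equation*}
	\| f_1 f_2 \|_{-s_1,-\theta_1} = \sup_{\|f_3\|_{s_1,\theta_1}\le 1}\ \left| \int_{\mathbb{R}^{1+n}} f_1\, f_2\, f_3 \; dt\, dx \right| ,
\end{equation*}
so the lemma is equivalent to the trilinear estimate $\left| \int f_1 f_2 f_3 \right| \lesssim \|f_1\|_{s_2,\theta_2}\|f_2\|_{s_3,\theta_3}\|f_3\|_{s_1,\theta_1}$. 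Passing to the space--time Fourier transform, replacing each $\tilde f_j$ by $|\tilde f_j|$, and substituting $G_j = \langle\xi\rangle^{\sigma_j}\langle ||\tau|-|\xi||\rangle^{\vartheta_j}|\tilde f_j| \ge 0$ (with $(\sigma_j,\vartheta_j)$ a relabelling of $(s_2,\theta_2),(s_3,\theta_3),(s_1,\theta_1)$, so that $\sigma_1+\sigma_2+\sigma_3 > \tfrac n2$, $\vartheta_1+\vartheta_2+\vartheta_3 > \tfrac12$, and $\|G_j\|_{L^2}=\|f_j\|_{\sigma_j,\vartheta_j}$), the matter reduces to
\begin{equation*}
	\mathcal I := \int_{\substack{\tau_1+\tau_2+\tau_3=0\\ \xi_1+\xi_2+\xi_3=0}} \frac{G_1(\tau_1,\xi_1)\,G_2(\tau_2,\xi_2)\,G_3(\tau_3,\xi_3)}{\displaystyle\prod_{j=1}^3 \langle\xi_j\rangle^{\sigma_j}\,\langle ||\tau_j|-|\xi_j||\rangle^{\vartheta_j}} \;\lesssim\; \prod_{j=1}^3 \|G_j\|_{L^2}.
\end{equation*}

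Next I would write $\mathcal I = \int_{\mathbb{R}^{1+n}} h_1 h_2 h_3\,dt\,dx$, where $\widehat{h_j}=\langle\xi\rangle^{-\sigma_j}\langle ||\tau|-|\xi||\rangle^{-\vartheta_j}G_j$, and decompose each factor dyadically, $h_j=\sum_{N_j,L_j}h_j^{N_j,L_j}$ with $h_j^{N_j,L_j}$ having space--time Fourier support in $\{\langle\xi\rangle\sim N_j,\ \langle ||\tau|-|\xi||\rangle\sim L_j\}$, $N_j,L_j\in 2^{\mathbb N}$; note $\|h_j^{N_j,L_j}\|_{L^2}\sim N_j^{-\sigma_j}L_j^{-\vartheta_j}\|G_j^{N_j,L_j}\|_{L^2}$. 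Two elementary facts drive the estimate of each piece $\mathcal I_{\vec N,\vec L}=\int h_1^{N_1,L_1}h_2^{N_2,L_2}h_3^{N_3,L_3}$: (i) the constraint $\xi_1+\xi_2+\xi_3=0$ forces the two largest of $N_1,N_2,N_3$ to be comparable; (ii) for any $g$ with Fourier support in $\{\langle\xi\rangle\sim N,\ \langle ||\tau|-|\xi||\rangle\sim L\}$, Plancherel, Young's inequality, and the bound $\lesssim N^n L$ for the measure of that support give $\|hg\|_{L^2_{t,x}}\le\|\widehat g\|_{L^1}\|\widehat h\|_{L^2}\lesssim N^{n/2}L^{1/2}\|g\|_{L^2}\|h\|_{L^2}$. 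For each dyadic piece I would estimate $\mathcal I_{\vec N,\vec L}\le\|h_a^{N_a,L_a}h_b^{N_b,L_b}\|_{L^2}\|h_c^{N_c,L_c}\|_{L^2}$, choosing the pairing $\{a,b\}$ according to the configuration, and then apply (ii) to the paired product; using (i) this yields $\mathcal I_{\vec N,\vec L}\lesssim C(\vec N,\vec L)\prod_j\|G_j^{N_j,L_j}\|_{L^2}$, and the proof closes once $\sum_{\vec N,\vec L}C(\vec N,\vec L)<\infty$, a Schur-type check in which the strict inequalities $\sum\sigma_j>\tfrac n2$ and $\sum\vartheta_j>\tfrac12$ are exactly what is needed (the former to pay for the spatial volume loss and for the summation over the smallest frequency, the latter for the modulation loss).

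The main obstacle is the bookkeeping in this last step: one must split into cases according to which of the $N_j$, and which of the $L_j$, are largest, and --- crucially --- choose the pairing $\{a,b\}$ so that the spatial volume loss $N^{n/2}$ is always thrown onto the two comparable top frequencies while genuine decay is retained on the smallest one, and similarly distribute the modulation weights so that each $L^{1/2}$ loss is charged against a strictly larger modulation weight. This produces several sub-cases but no new ideas. An essentially equivalent, and perhaps cleaner, route --- the one taken in \cite{Selberg} --- is to invoke the transfer principle to peel off the two weight families independently: the modulation weights reduce to the one-dimensional product law $\|uv\|_{H^{-\vartheta_1}(\mathbb R)}\lesssim\|u\|_{H^{\vartheta_2}(\mathbb R)}\|v\|_{H^{\vartheta_3}(\mathbb R)}$ valid for $\sum\vartheta_j>\tfrac12$, while the spatial weights reduce to the classical Sobolev product law on $\mathbb R^n$, $\|uv\|_{H^{-\sigma_1}}\lesssim\|u\|_{H^{\sigma_2}}\|v\|_{H^{\sigma_3}}$, valid for $\sum\sigma_j>\tfrac n2$ (all exponents nonnegative).
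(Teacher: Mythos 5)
The paper offers no proof of this lemma: it is imported verbatim from Selberg's thesis (Proposition~10), so the only question is whether your argument is sound on its own. Your duality reduction, the reduction to a trilinear weighted convolution estimate, the bilinear $L^2$ bound via the Fourier--support volume $\lesssim N^nL$, and the observation that the convolution constraint forces the two largest spatial frequencies to be comparable are all correct. The genuine gap is in the summation you defer to as ``a Schur-type check''. First, the criterion $\sum_{\vec N,\vec L}C(\vec N,\vec L)<\infty$ is not the right one: along the forced diagonal $N_{(2)}\sim N_{(3)}$ one must use Cauchy--Schwarz in the dyadic parameter (e.g.\ when $\sigma_2=\sigma_3=0$ the constant has no decay in $N_2$ and $\sum_{\vec N}C=\infty$ even though the estimate holds). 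More seriously, the modulations behave differently from the spatial frequencies: the constraints $\tau_1+\tau_2+\tau_3=0$, $\xi_1+\xi_2+\xi_3=0$ do \emph{not} force the two largest of $L_1,L_2,L_3$ to be comparable (two factors can be free waves, $L\sim1$, while the third has modulation as large as $\min(|\xi_1|,|\xi_2|)$), so each $L_j$-sum must converge on its own. When some $\vartheta_j=0$ --- which the hypotheses allow, e.g.\ $\theta_2=\theta_3=0$, $\theta_1>\tfrac12$ --- the sum $\sum_{L_j}\|G_j^{N_j,L_j}\|_{L^2}$ carries no decay and is not controlled by $\|G_j\|_{L^2}$; your remark that ``each $L^{1/2}$ loss is charged against a strictly larger modulation weight'' does not repair this, since the support bound only yields a gain in $\min(L_a,L_b)$ for the single chosen pairing and produces no decay in the remaining $L_j$. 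Even after the natural fix of not decomposing in $L_j$ when $\vartheta_j=0$, there remain configurations (e.g.\ $N_3$ smallest with $\vartheta_3=0$ and $\vartheta_1,\vartheta_2\in(0,\tfrac12)$, $\vartheta_1+\vartheta_2>\tfrac12$) in which no single pairing simultaneously delivers the spatial gain $N_{\min}^{n/2}$ and a modulation gain split across both weighted factors; the scheme as described does not close.

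Your closing remark does point at the correct route --- and the one actually behind the citation: reduce, via duality and the decomposition into $\pm\tau\ge0$ together with the substitution $\tau\mapsto\tau\mp|\xi|$, to a composition of the one-dimensional product law for $\sum\vartheta_j>\tfrac12$ with the $\mathbb{R}^n$ Sobolev product law for $\sum\sigma_j>\tfrac n2$ (equivalently, transfer to products of modulated free waves after using duality to place the $>\tfrac12$ modulation weight on an input factor). But this is stated as an aside, not carried out, and it is not ``essentially equivalent'' to the dyadic scheme above: the tensorization is precisely what circumvents the failure of the naive modulation summation, and it needs its own care because the weight $\langle||\tau|-|\xi||\rangle$ mixes $\tau$ and $\xi$. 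As written, the main line of your proposal does not establish the lemma.
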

Next, we will introduce some propositions, which plays an important role in the paper.
\subsection{Some propositions}
\begin{proposition}\label{LD2}
Assume $s>\frac{n}{2}$, $\theta \in(\frac12,1)$ and $\varepsilon \in(0,1-\theta]$. Then
	\begin{equation}\label{LDe0}
		\| f_1 f_2 \|_{s-1,\theta+\varepsilon-1 } \leq C \|f_1 \|_{s-1,\theta} \|f_2 \|_{s,\theta+\varepsilon-1}.
	\end{equation}
\end{proposition}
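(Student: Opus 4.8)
The plan is to prove \eqref{LDe0} as a bilinear estimate in the wave--Sobolev spaces, carried out entirely on the space--time Fourier side. Set $b=\theta+\varepsilon-1$, so that $-\tfrac12<b\le 0$, and put $\mu:=-b=1-\theta-\varepsilon\in[0,\tfrac12)$. First I would dualize: since the $L^2_{t,x}$-dual of $H^{s-1,b}$ is $H^{1-s,\mu}$, the bound \eqref{LDe0} follows once we show, for all $g_1,g_2$,
$$\|g_1 g_2\|_{1-s,-\theta}\lesssim \|g_1\|_{s,b}\,\|g_2\|_{1-s,\mu},$$
after which one takes $g_1=\overline{f_2}$, $g_2=h$ with $\|h\|_{1-s,\mu}\le1$ and uses $\|\overline{f_2}\|_{s,b}=\|f_2\|_{s,b}$. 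The virtue of this reduced inequality is that the output and the factor $g_1$ now carry the nonpositive modulation weights $-\theta$ and $b$, while only $g_2$ carries a weight, $\mu<\tfrac12$. It is essential to retain the output modulation weight $\langle||\tau|-|\xi||\rangle^{-\theta}$ throughout: simply bounding it by $1$ (legitimate since $-\theta\le0$) already kills the estimate, because a factor concentrated far from the light cone — which is cheap in $H^{s,b}$ precisely because $b<0$ — can produce an output far from the cone, where only the decay $\langle||\tau|-|\xi||\rangle^{-\theta}$ keeps the norm finite. Writing out Plancherel, the inequality amounts to bounding a convolution over $\{\tau_1+\tau_2=\tau,\ \xi_1+\xi_2=\xi\}$ of the weighted pieces by the product of their $L^2$-norms.

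To prove the reduced estimate directly I would decompose $g_1=\sum g_1^{N_1,L_1}$ and $g_2=\sum g_2^{N_2,L_2}$ dyadically in spatial frequency ($|\xi|\sim N_i$) and in modulation ($\langle||\tau|-|\xi||\rangle\sim L_i$), localize the product $g_1g_2$ in output frequency $N$ and output modulation $L$, and split each factor into its $\tau\gtrless0$ half-waves. For each configuration this reduces to a single-piece bound $\|(g_1^{N_1,L_1}g_2^{N_2,L_2})^{N,L}\|_{L^2_{t,x}}\le C(N,L;N_1,L_1,N_2,L_2)\,\|g_1^{N_1,L_1}\|_{L^2}\|g_2^{N_2,L_2}\|_{L^2}$, whose constant $C$ is governed by the volume of the intersection of the two thickened cones; equivalently, writing each high-modulation factor as an $L^2$-superposition of modulated free solutions of \eqref{k1}, one may invoke the transference principle together with the $L^2$ bilinear estimate and the Strichartz inequality for the wave equation. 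One then multiplies by the remaining weights $\langle N\rangle^{1-s}\langle L\rangle^{-\theta}$, $\langle N_1\rangle^{-s}\langle L_1\rangle^{-b}$, $\langle N_2\rangle^{s-1}\langle L_2\rangle^{-\mu}$ and is left with summing a product of dyadic weights over all $N,L,N_i,L_i$.

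The summation closes using the three pieces of room in the hypotheses: $s>\tfrac n2$ gives an extra negative power in the spatial-frequency sums (in particular it controls the high--high interaction $N_1\sim N_2\gg N$), $\theta>\tfrac12$ handles the $L_1$-sum, and $\theta+\varepsilon>\tfrac12$ — i.e. $b>-\tfrac12$ — handles the rest. I expect the main obstacle to be exactly the high--high-to-low spatial interaction with "resonant" half-wave signs, which forces the output modulation $L$ to be comparable to $N_1\sim N_2$: there the nonpositive output weight $\langle L\rangle^{-\theta}$ (equivalently $\langle||\tau|-|\xi||\rangle^{\theta+\varepsilon-1}$ in the original formulation) is what supplies the gain needed for convergence, and one must also separately treat the branch in which $g_1$ itself is far from the cone ($L_1$ large), where $H^{s,b}$ degenerates to an ordinary negative-order Sobolev space and the argument again relies on the output modulation weight. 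Finally I would note that in the endpoint case $\varepsilon=1-\theta$ (so $b=0$, $\mu=0$) the whole argument collapses: applying $\langle\xi\rangle^{s-1}\lesssim\langle\xi_1\rangle^{s-1}+\langle\xi_2\rangle^{s-1}$ in $\|f_1f_2\|_{s-1,0}$ reduces matters to the two $L^2_{t,x}$ bilinear bounds $\|(\Lambda^{s-1}f_1)f_2\|_{L^2}$ and $\|f_1(\Lambda^{s-1}f_2)\|_{L^2}$, each of which is immediate from Lemma~\ref{prodE}; the real content of Proposition~\ref{LD2} is the extension to $b<0$.
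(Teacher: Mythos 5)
Your route is genuinely different from the paper's, and considerably heavier. The paper's proof of \eqref{LDe0} for $\varepsilon<1-\theta$ occupies a few lines: since $s-1>0$, the frequency-side Leibniz inequality $\langle\xi\rangle^{s-1}\lesssim\langle\xi_1\rangle^{s-1}+\langle\xi_2\rangle^{s-1}$ reduces the claim to bounding $\|\Lambda^{s-1}f_1\cdot f_2\|_{0,\theta+\varepsilon-1}$ and $\|f_1\cdot\Lambda^{s-1}f_2\|_{0,\theta+\varepsilon-1}$, each of which is then a single application of the quoted product law (Lemma~\ref{prodE}) with $\theta_1=1-\theta-\varepsilon$; no duality argument and no dyadic decomposition appear. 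Your treatment of the endpoint $\varepsilon=1-\theta$ coincides with the paper's. Your preparatory steps are correct: the dual of $H^{s-1,b}$ under the $L^2_{t,x}$ pairing is indeed $H^{1-s,-b}$, conjugation preserves these norms, and your insistence on retaining the negative output modulation weight is well taken --- discarding it does destroy the estimate, as a factor $f_2$ concentrated at large modulation shows, and this is exactly why the claim is not a direct instance of a product law with nonnegative exponents only.

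That said, your proposal has a gap of execution rather than of conception: everything after the dualization --- the bilinear cone estimates for each localized piece, the treatment of the sign and resonance configurations, and above all the verification that the sixfold dyadic sum over $(N,L,N_1,L_1,N_2,L_2)$ closes under exactly the hypotheses $s>\frac{n}{2}$, $\theta>\frac12$, $\theta+\varepsilon>\frac12$ --- is announced but not carried out, and that is where the entire quantitative content of the proposition lies. The two interactions you yourself flag as delicate (high--high to low output with resonant half-wave signs, and the branch where $g_1$ sits far from the cone) are precisely the cases that must be checked and for which the claimed ``room'' has to be exhibited explicitly. To turn this into a proof you must either perform that case analysis in full or, as the paper does, reduce the statement to a product estimate that is quoted as a black box.
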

\begin{proof}
If $\varepsilon=1-\theta$, using $s>\frac{n}{2}, \theta \in(\frac12,1)$, then we have
\begin{equation}\label{T3}
	\| f_1 f_2 \|_{s-1,0} \lesssim \|f_1 \|_{s-1,0} \|f_2 \|_{s,0} \lesssim \|f_1 \|_{s-1,\theta} \|f_2 \|_{s,0}.
\end{equation}
If $\varepsilon< 1-\theta$, using the triangle inequality on the frequency side, it's not hard to see the following estimate
\begin{equation*}
	\Lambda^{a}( f_1 f_2 )  \lesssim \Lambda^{a} f_1 \cdot f_2 + \Lambda^{a} f_2 \cdot f_1, \quad a>0.
\end{equation*}
So we have	
\begin{equation}\label{LDe1}
\begin{split}
\| f_1 f_2 \|_{s-1,\theta+\varepsilon-1} = & C \|\Lambda^{s-1}( f_1 f_2 ) \|_{0,\theta+\varepsilon-1}
\\
\leq & C  ( \|\Lambda^{s-1} f_1 \cdot f_2  \|_{0,\theta+\varepsilon-1}+ \|\Lambda^{s-1} f_2 \cdot f_1  \|_{0,\theta+\varepsilon-1} ).
\end{split}
\end{equation}
By Lemma \ref{prodE} (taking $s_1=0$, $\theta_1=1-\theta-\varepsilon$, $s_2=0$, $\theta_2=\theta$, and $s_3=s$, $ \theta_3=\theta+\varepsilon-1$), we get
\begin{equation}\label{LDe2}
\begin{split}
 \|\Lambda^{s-1} f_1 \cdot f_2  \|_{0,\theta+\varepsilon-1} \leq & C\|\Lambda^{s-1} f_1  \|_{0,\theta} \| f_2  \|_{s,\theta+\varepsilon-1}
 \\
  \leq & C\| f_1  \|_{s-1,\theta} \| f_2  \|_{s,\theta+\varepsilon-1} .
\end{split}
\end{equation}
Using Lemma \ref{prodE} again (taking $s_1=0$, $\theta_1=1-\theta-\varepsilon$, $s_2=s-1$, $\theta_2=\theta$, and $s_3=1$, $ \theta_3=\theta+\varepsilon-1$), we can derive
\begin{equation}\label{LDe3}
\begin{split}
\|\Lambda^{s-1} f_2 \cdot f_1  \|_{0,\theta+\varepsilon-1} \leq &  C\| f_1  \|_{s-1,\theta} \|\Lambda^{s-1} f_2  \|_{1,\theta+\varepsilon-1}
\\
  \leq & C\| f_1  \|_{s-1,\theta} \| f_2  \|_{s,\theta+\varepsilon-1} .
\end{split}
\end{equation}
Combining \eqref{LDe1}-\eqref{LDe3}, then \eqref{LDe0} holds when $\varepsilon< 1-\theta$. In a result, we complete the proof of Proposition \ref{LD2}.
\end{proof}
\begin{proposition}\label{LD3}
	Assume $s>\frac{n}{2}$, $\theta \in(\frac12,1)$ and $\varepsilon \in(0,1-\theta]$. Then
	\begin{equation}\label{LDe4}
		\| f_1 f_2 \|_{s,\theta+\varepsilon-1 } \leq C \|f_1 \|_{s,\theta} \|f_2 \|_{s,\theta+\varepsilon-1}.
	\end{equation}
\end{proposition}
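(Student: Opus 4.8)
The plan is to run the argument in complete parallel with the proof of Proposition~\ref{LD2}: the estimate \eqref{LDe4} differs from \eqref{LDe0} only in that the loss-free factor $f_1$ and the left-hand side are measured at the Sobolev exponent $s$ instead of $s-1$, so each step below is the corresponding step of Proposition~\ref{LD2} with the bookkeeping exponent $s-1$ replaced by $s$. As there, I split into the endpoint case $\varepsilon=1-\theta$ and the interior case $\varepsilon<1-\theta$.

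For the endpoint $\varepsilon=1-\theta$ one has $\theta+\varepsilon-1=0$, and the claim reduces to the chain
\[
\|f_1 f_2\|_{s,0}\leq C\|f_1\|_{s,0}\|f_2\|_{s,0}\leq C\|f_1\|_{s,\theta}\|f_2\|_{s,0},
\]
exactly as in the first display of the proof of Proposition~\ref{LD2} with $s-1$ upgraded to $s$. The second inequality is monotonicity of $\|\cdot\|_{s,\theta'}$ in $\theta'\geq0$; the first is the Sobolev product estimate for $H^{s,0}$ used there, legitimate since $s>\frac n2$ and $\theta\in(\frac12,1)$, which at fixed time reduces to the algebra property $H^s(\mathbb{R}^n)\cdot H^s(\mathbb{R}^n)\hookrightarrow H^s(\mathbb{R}^n)$ together with the embedding $H^{s,\theta}\hookrightarrow L^\infty_t H^s_x$ recorded after \eqref{HS1}.

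For the interior case $\varepsilon<1-\theta$ I would repeat the Fourier-side Leibniz trick of Proposition~\ref{LD2}: since $s>0$, the frequency triangle inequality gives $\Lambda^{s}(f_1f_2)\lesssim \Lambda^{s}f_1\cdot f_2+\Lambda^{s}f_2\cdot f_1$, whence
\[
\|f_1f_2\|_{s,\theta+\varepsilon-1}\leq C\|\Lambda^{s}f_1\cdot f_2\|_{0,\theta+\varepsilon-1}+C\|f_1\cdot\Lambda^{s}f_2\|_{0,\theta+\varepsilon-1}.
\]
To the first summand I apply Lemma~\ref{prodE} with $(s_1,\theta_1)=(0,1-\theta-\varepsilon)$, $(s_2,\theta_2)=(0,\theta)$, $(s_3,\theta_3)=(s,\theta+\varepsilon-1)$, whose admissibility $s_1+s_2+s_3=s>\frac n2$ and $\theta_1+\theta_2+\theta_3=\theta>\frac12$ holds, obtaining $\|\Lambda^{s}f_1\cdot f_2\|_{0,\theta+\varepsilon-1}\leq C\|f_1\|_{s,\theta}\|f_2\|_{s,\theta+\varepsilon-1}$; to the second summand I apply Lemma~\ref{prodE} with $(s_1,\theta_1)=(0,1-\theta-\varepsilon)$, $(s_2,\theta_2)=(s,\theta)$, $(s_3,\theta_3)=(0,\theta+\varepsilon-1)$, with the same admissibility, obtaining $\|f_1\cdot\Lambda^{s}f_2\|_{0,\theta+\varepsilon-1}\leq C\|f_1\|_{s,\theta}\|f_2\|_{s,\theta+\varepsilon-1}$. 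Adding the two bounds gives \eqref{LDe4}.

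The only point that needs a moment's thought — and it is precisely the point already handled in Proposition~\ref{LD2} — is that in the second summand the entire $\Lambda^{s}$ is loaded onto the factor carrying the weaker time weight $\theta+\varepsilon-1$, so one must confirm Lemma~\ref{prodE} still applies with all the spatial regularity on a single slot; this is fine because $s_1+s_2+s_3>\frac n2$ already holds with $s_2=s$ and $s_1=s_3=0$. Apart from this bookkeeping the proof is mechanical; alternatively \eqref{LDe4} can be derived from Proposition~\ref{LD2} itself by writing $\|f_1f_2\|_{s,\theta+\varepsilon-1}=\|\Lambda(f_1f_2)\|_{s-1,\theta+\varepsilon-1}$, distributing $\Lambda$ on the Fourier side, applying \eqref{LDe0} with $f_1\mapsto\Lambda f_1$ to one term and Lemma~\ref{prodE} to the other.
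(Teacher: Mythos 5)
Your proof is correct and follows essentially the same route as the paper's: the same split into the endpoint $\varepsilon=1-\theta$ and interior $\varepsilon<1-\theta$ cases, the same frequency-side Leibniz decomposition $\Lambda^{s}(f_1f_2)\lesssim \Lambda^{s}f_1\cdot f_2+\Lambda^{s}f_2\cdot f_1$, and two applications of Lemma~\ref{prodE}. Your parameter choice $(s_2,\theta_2)=(s,\theta)$, $(s_3,\theta_3)=(0,\theta+\varepsilon-1)$ for the second summand is in fact the cleaner one: the paper reuses $(s_2,s_3)=(s-1,1)$ from Proposition~\ref{LD2}, which applied to $\Lambda^{s}f_2\cdot f_1$ would produce $\|\Lambda^{s}f_2\|_{1,\theta+\varepsilon-1}=\|f_2\|_{s+1,\theta+\varepsilon-1}$ rather than the claimed $\|f_2\|_{s,\theta+\varepsilon-1}$, a slip your choice avoids.
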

\begin{proof}
If $\varepsilon=1-\theta$, using $s>\frac{n}{2}, \theta \in(\frac12,1)$, then we have
\begin{equation}\label{T2}
	\| f_1 f_2 \|_{s,0} \lesssim \|f_1 \|_{s,0} \|f_2 \|_{s,0} \lesssim \|f_1 \|_{s,\theta} \|f_2 \|_{s,0}.
\end{equation}
If $\varepsilon<1-\theta$,
using the triangle inequality on the frequency side, it's not hard to see the following estimate
\begin{equation*}
	\Lambda^{a}( f_1 f_2 )  \lesssim \Lambda^{a} f_1 \cdot f_2 + \Lambda^{a} f_2 \cdot f_1, \quad a>0.
\end{equation*}	
So we have
	\begin{equation}\label{LDe5}
		\begin{split}
			\| f_1 f_2 \|_{s,\theta+\varepsilon-1} = & C \|\Lambda^{s}( f_1 f_2 ) \|_{0,\theta+\varepsilon-1}
			\\
			\leq & C  ( \|\Lambda^{s} f_1 \cdot f_2  \|_{0,\theta+\varepsilon-1}+ \|\Lambda^{s} f_2 \cdot f_1  \|_{0,\theta+\varepsilon-1} ).
		\end{split}
	\end{equation}
	By Lemma \ref{prodE} (taking $s_1=0$, $\theta_1=1-\theta-\varepsilon$, $s_2=0$, $\theta_2=\theta$, and $s_3=s$, $ \theta_3=\theta+\varepsilon-1$), we get
	\begin{equation}\label{LDe6}
		\begin{split}
			\|\Lambda^{s} f_1 \cdot f_2  \|_{0,\theta+\varepsilon-1} \leq & C\|\Lambda^{s} f_1  \|_{0,\theta} \| f_2  \|_{s,\theta+\varepsilon-1}
			\\
			\leq & C\| f_1  \|_{s,\theta} \| f_2  \|_{s,\theta+\varepsilon-1} .
		\end{split}
	\end{equation}
	Similarly, using Lemma \ref{prodE} (taking $s_1=0$, $\theta_1=1-\theta-\varepsilon$, $s_2=s-1$, $\theta_2=\theta$, and $s_3=1$, $ \theta_3=\theta+\varepsilon-1$) again, we can prove that
	\begin{equation}\label{LDe7}
		\begin{split}
			\|\Lambda^{s} f_2 \cdot f_1  \|_{0,\theta+\varepsilon-1} \leq &  C\| f_1  \|_{s,\theta} \|\Lambda^{s} f_2  \|_{1,\theta+\varepsilon-1}
			\\
			\leq & C\| f_1  \|_{s,\theta} \| f_2  \|_{s,\theta+\varepsilon-1} .
		\end{split}
	\end{equation}
	Gathering \eqref{LDe5}-\eqref{LDe7}, we can get \eqref{LDe4} when $\varepsilon<1-\theta$. So we have finished the proof of Proposition \ref{LD3}.
\end{proof}
\begin{proposition}\label{LD4}
	Assume $0<s<1$. Let $(t,\bx)$ be Euler coordinate, and $(t,\by)$ be Lagrange coordinate. Let $u$ be a function from $(t,\by) \rightarrow \mathbb{R}^n$. Let $\bar{u}(t,\bx)=u(t,\by(t,\bx))$. If $\text{det}(\frac{\partial \bx}{\partial \by})=1$, then we have
	\begin{equation}\label{ab00}
			\| u \|_{H^s(\mathbb{R}^n_y)} \leq C \| \frac{\partial \bx}{\partial \by} \|^{\frac{n}{2}+s}_{L^\infty(\mathbb{R}^n_x)} \|\bar{u} \|_{H^s(\mathbb{R}^n_x)},
	\end{equation}
	and
	\begin{equation}\label{ab01}
 \| \bar{u}\|_{H^s(\mathbb{R}^n_x)} \leq C \| \frac{\partial \by}{\partial \bx} \|^{\frac{n}{2}+s}_{L^\infty(\mathbb{R}^n_y)} \| u \|_{H^s(\mathbb{R}^n_y)}.
	\end{equation}
\end{proposition}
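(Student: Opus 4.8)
The plan is to reduce the bound on the $H^s$ norm under a volume-preserving change of variables to two ingredients: the fact that composition with a bi-Lipschitz diffeomorphism of Jacobian determinant one preserves the $L^2$ norm exactly, and an interpolation-type control of the homogeneous $\dot H^s$ seminorm for $0<s<1$ via a Gagliardo-type difference quotient. Since the two inequalities \eqref{ab00} and \eqref{ab01} are symmetric in the roles of $\bx$ and $\by$ (note $\mathrm{det}(\partial\by/\partial\bx)=1$ as well), it suffices to prove \eqref{ab01}, say, and then \eqref{ab00} follows by swapping the roles of the coordinates.

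First I would handle the $L^2$ part. Writing $\bx\mapsto\by(\bx)$ for the inverse flow and using $\mathrm{det}(\partial\bx/\partial\by)=1$ (equivalently $\mathrm{det}(\partial\by/\partial\bx)=1$), the change of variables formula gives $\|\bar u\|_{L^2(\mathbb{R}^n_x)}=\|u\|_{L^2(\mathbb{R}^n_y)}$ exactly, so the $L^2$ contribution costs no power of the Jacobian. Next, for the homogeneous part, I would use the Gagliardo--Slobodeckij characterization
\begin{equation*}
\|\bar u\|_{\dot H^s(\mathbb{R}^n_x)}^2 \approx \int_{\mathbb{R}^n_x}\int_{\mathbb{R}^n_x} \frac{|\bar u(\bx)-\bar u(\bx')|^2}{|\bx-\bx'|^{n+2s}}\,d\bx\,d\bx',
\end{equation*}
valid for $0<s<1$. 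Substituting $\bx=\bx(\by)$, $\bx'=\bx(\by')$ and using that $|\bar u(\bx)-\bar u(\bx')|=|u(\by)-u(\by')|$, that the two Jacobian factors in $d\bx\,d\bx'$ are both equal to one, and that the bi-Lipschitz bound $|\bx-\bx'|\ge \|\partial\by/\partial\bx\|_{L^\infty}^{-1}|\by-\by'|$ controls the kernel from below, I get $\|\bar u\|_{\dot H^s(\mathbb{R}^n_x)}^2 \le \|\partial\by/\partial\bx\|_{L^\infty}^{n+2s}\,\|u\|_{\dot H^s(\mathbb{R}^n_y)}^2$. Combining the two pieces and absorbing the $L^2$ term (where the Jacobian power is trivially $\le \|\partial\by/\partial\bx\|_{L^\infty}^{n+2s}$ since this quantity is $\ge1$ by the determinant-one constraint) yields \eqref{ab01} with the stated exponent $\tfrac n2+s$; taking square roots converts the exponent $n+2s$ on the squared norms into $\tfrac n2+s$ on the norms themselves.

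The main obstacle is purely bookkeeping on the Jacobian bounds: one must make sure the bi-Lipschitz estimate is used in the correct direction (controlling $|\bx-\bx'|$ from below in terms of $|\by-\by'|$ so that the kernel $|\bx-\bx'|^{-(n+2s)}$ is controlled from above), and that no spurious power of the Jacobian sneaks in through the measures, which is exactly where the incompressibility hypothesis $\mathrm{det}(\partial\bx/\partial\by)=1$ is essential — without it one would pick up extra $L^\infty$ norms of the Jacobian from the change of variables in the double integral. A minor technical point is that the Gagliardo characterization holds only for $0<s<1$, which is precisely the range assumed in the statement; for the endpoints $s=0$ (trivial, just $L^2$) and $s=1$ one would instead differentiate and use the chain rule, but those are excluded here. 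I would also remark that $\partial\by/\partial\bx = (\partial\bx/\partial\by)^{-1}$, so by the cofactor formula \eqref{a06}--\eqref{a10} the $L^\infty$ norm of $\partial\by/\partial\bx$ is controlled polynomially by that of $\bG=\bF-\bE$, which is how this proposition will be applied later.
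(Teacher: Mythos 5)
Your proposal is correct and follows essentially the same route as the paper's proof: exact $L^2$ invariance from the unit Jacobian, the Gagliardo--Slobodeckij characterization of $\dot H^s$ for $0<s<1$, a Lipschitz lower bound on the distance to control the kernel, and a determinant-one change of variables in the double integral. The only cosmetic difference is that you prove \eqref{ab01} and obtain \eqref{ab00} by symmetry, whereas the paper argues \eqref{ab00} directly and notes \eqref{ab01} is analogous.
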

\begin{proof}
Firstly, by changing of coordinates, we have
\begin{equation}\label{ab0}
	\begin{split}
		\| u(t,\cdot) \|_{L^2(\mathbb{R}^n)} &= \int_{\mathbb{R}^n_y} | u(t,\by) |^2 d\by
		\\
		& = \int_{\mathbb{R}^n} | \bar{u}(t,\bx) |^2  \text{det}(\frac{\partial \by}{\partial \bx}) d\bx
		\\
		& = \| \bar{u} \|_{L^2(\mathbb{R}^n_x)}.
	\end{split}
\end{equation}
For the homogeneous norm $\dot{H}^s$, cf. \cite{BCD}, we have
\begin{equation*}
	\begin{split}
		\| u(t,\cdot)  \|^2_{\dot{H}^s (\mathbb{R}^n_y)} &= \int_{\mathbb{R}^n} \int_{\mathbb{R}^n} \frac{|u(t,\by+\bh)-u(t,\by)|^2}{|\bh|^{n+2s}} d\by d\bh.
	\end{split}
\end{equation*}
So we also obtain
\begin{equation}\label{ab1}
	\begin{split}
		\| u \|^2_{\dot{H}^s (\mathbb{R}^n_y)} &= \int_{\mathbb{R}^n} \int_{\mathbb{R}^n} \frac{|\bar{u}(t,x(t,\by+\bh))-\bar{u}(t,x(t,\by))|^2}{|x(t,\by+\bh)-x(t,\by)|^{n+2s}} \cdot \frac{|x(t,\by+\bh)-x(t,\by)|^{n+2s} }{|\bh|^{n+2s} } d\by d\bh
		\\
		&\leq \| \frac{\partial \bx}{\partial \by} \|^{n+2s}_{L^\infty(\mathbb{R}^n_x)} \cdot \int_{\mathbb{R}^n} \int_{\mathbb{R}^n} \frac{|\bar{u}(t,x(t,\by+\bh))-\bar{u}(t,x(t,\by))|^2}{|x(t,\by+\bh)-x(t,\by)|^{n+2s}} d\by d\bh .
	\end{split}
\end{equation}
On the other hand, by changing of coordinates, it yields
\begin{equation}\label{ab2}
	\begin{split}
		& \int_{\mathbb{R}^n} \int_{\mathbb{R}^n} \frac{|\bar{u}(t,x(t,\by+\bh))-\bar{u}(t,x(t,\by))|^2}{|x(t,\by+\bh)-x(t,\by)|^{n+2s}} d\by d\bh 
		 \\
		 =& \int_{\mathbb{R}^n} \int_{\mathbb{R}^n} \frac{|\bar{u}(t,x(t,\bar{\by}))-\bar{u}(t,x(t,\by))|^2}{|x(t,\bar{\by})-x(t,\by)|^{n+2s}} d\bar{\by} d\by 
		 \\
		 =& \int_{\mathbb{R}^n} \int_{\mathbb{R}^n} \left( \frac{|\bar{u}(t,x(t,\bar{\by}))-\bar{u}(t,\bx)|^2}{|x(t,\bar{\by})-\bx|^{n+2s}} \right)  \text{det}(\frac{\partial \bx}{\partial \by}) d\bar{\by} d\bx 
		 \\
		= & \int_{\mathbb{R}^n} \int_{\mathbb{R}^n} \left( \frac{|\bar{u}(t,\bar{\bx})-\bar{u}(t,\bx)|^2}{|\bar{\bx}-\bx|^{n+2s}} \right)  \text{det}(\frac{\partial \bx(\bar{\by})}{\partial \bar{\by}}) d{\bar{\bx}} d\bx 
		 \\
		 	= & \int_{\mathbb{R}^n} \int_{\mathbb{R}^n}  \frac{|\bar{u}(\bar{\bx})-\bar{u}(\bx)|^2}{|\bar{\bx}-\bx|^{n+2s}}  d{\bar{\bx}} d\bx .
	\end{split}
\end{equation}
where we set $x(t,\bar{\by})=\bar{\bx}$. Combining \eqref{ab0}, \eqref{ab1}, and \eqref{ab2}, we can conclude the conclusion \eqref{ab00}. Similarly, we can also get \eqref{ab01}.
\end{proof}
\section{Proof of Theorem \ref{thm}, Theorem \ref{thmb} and Theorem \ref{thm2}}
\subsection{Proof of Theorem \ref{thm}}
We will prove Theorem \ref{thm} by several steps. Since there is only unknown functions $\bG$ in \eqref{a08}, so we firstly consider \eqref{a08} with the initial condition \eqref{a09}. After that, we then study $\bU$ based on the local well-posedness of \eqref{a08}.

Recall\footnote{Here \eqref{a15} is equivalent to $\text{det}\bF=1$.}
\begin{equation}\label{p0}
	\begin{cases}
		& \square G^{ia}=\frac{\partial }{\partial y^a} ( \frac{\partial p}{\partial x^i} ) ,
		\\
		& \Delta_x p=(\bF^{-1})^{kl}(\bF^{-1})^{mj} Q_0(G^{mk}, G^{jl}) ,
		\\
		& 	 (G^{ia}, \frac{\partial G^{ia}}{\partial t})|_{t=0}=(\frac{\partial U_0^{ia}}{\partial y^a}, \frac{\partial v_0^{i}}{\partial y^a}).
	\end{cases}
\end{equation}
where $\bF^{-1}$ satisfies \eqref{a15}.

We will use the contraction mapping principle to establish the existence of solution. We define the working space
\begin{equation}\label{Xd}
	X_{s,\theta}=\left\{ u\in \mathcal{S}'(\mathbb{R}^{1+n}): |u|_{s,\theta}\leq C_1(\|\bU_0\|_{H^{s+1}}+\|\bv_0\|_{H^{s}}) \right\}.
\end{equation}
Here $C_1$ will be defined later. For $\bG \in X_{s,\theta}$, we denote the map $\text{M}$ by
\begin{equation}\label{t0}
\begin{split}
	\text{M} G^{ia}= 	&\chi(t) \left\{ \cos(tD) (\frac{\partial U^i_0}{\partial y^a}) +  D^{-1} \sin(tD) (\frac{\partial v^i_0}{\partial y^a}) \right\} 
	\\
	& + \chi(\frac{t}{T}) \int^t_0 D^{-1} \sin( (t-t')D ) \left\{  \phi( T^{\frac12}\Lambda_{-} ) \frac{\partial }{\partial y^a}(\frac{\partial p}{\partial x_i}) \right\} (t')dt'
	\\
	 &+  \square^{-1} ( 1-\phi( T^{\frac12}\Lambda_{-} ) )  \frac{\partial }{\partial y^a}(\frac{\partial p}{\partial x_i}),
\end{split}	
\end{equation}
where $p$ satisfies (please refer \eqref{a08})
\begin{equation}\label{t00}
	\Delta_x p=(\bF^{-1})^{kl}(\bF^{-1})^{mj}  Q_0(G^{mk}, G^{jl}).
\end{equation}
By using Lemma \ref{nonlinearE}, we shall calculate out
\begin{equation}\label{t1}
	\begin{cases}
		& \square \text{M} G^{ia}=\frac{\partial }{\partial y^a} ( \frac{\partial p}{\partial x^i} ), \quad \text{on} \quad [0,T]\times \mathbb{R}^n,
	\\
	& \text{M} G^{ia}|_{t=0}=\frac{\partial U^i_0}{\partial y^a}, \quad  \frac{\partial } {\partial t} \text{M} G^{ia}|_{t=0}=\frac{\partial v^i_0}{\partial y^a}. 
	\end{cases}
\end{equation}

Using Lemma \ref{nonlinearE} again, we have
\begin{equation}\label{t2}
	\begin{split}
	| \text{M} \bG |_{s,\theta} \leq & C( \|\frac{\partial \bU_0}{\partial y}\|_{H^s}+ \| \frac{\partial \bv_0}{\partial y} \|_{H^{s-1}} + T^{\frac{\varepsilon}{2}}  \| \frac{\partial }{\partial y}(\frac{\partial p}{\partial x}) \|_{s-1,\theta+ \varepsilon -1 }).
	\end{split}
\end{equation}
Note $	\frac{\partial }{\partial y}(\frac{\partial p}{\partial x})= \frac{\partial }{\partial x}(\frac{\partial p}{\partial x}) \cdot \frac{\partial x}{\partial y} $, so we get
\begin{equation}\label{t20}
		\frac{\partial }{\partial y}(\frac{\partial p}{\partial x})
		= \bF\cdot \frac{\partial }{\partial x}(\frac{\partial p}{\partial x})
		= (\bE+\bG)\cdot \frac{\partial }{\partial x}(\frac{\partial p}{\partial x}).
\end{equation}
We then get
\begin{equation}\label{t3}
	\begin{split}
		| \text{M} \bG |_{s,\theta} \leq  & C \left\{ \|\bU_0\|_{H^{s+1}}+\|\bv_0\|_{H^{s}}+ T^{\frac{\varepsilon}{2}}  ( \| \frac{\partial^2 p}{\partial^2 x} \|_{s-1,\theta+ \varepsilon -1 }+ \| \bG \frac{\partial^2 p}{\partial^2 x} \|_{s-1,\theta+ \varepsilon -1 } ) \right\}
		\\
		\leq  & C \left\{  \|\bU_0\|_{H^{s+1}}+\|\bv_0\|_{H^{s}} + T^{\frac{\varepsilon}{2}}  (1+ \| \bG \|_{s,\theta}) \| \frac{\partial^2 p}{\partial^2 x} \|_{s-1,\theta+ \varepsilon -1 } \right\}.
	\end{split}
\end{equation}
For $\theta+ \varepsilon -1 \leq 0$, notice
\begin{equation}\label{T0}
	\| \frac{\partial^2 p}{\partial^2 \bx} \|_{s-1,\theta+ \varepsilon -1 } \leq \| \frac{\partial^2 p}{\partial^2 \bx} \|_{s-1,0}=\| \frac{\partial^2 p}{\partial^2 \bx} \|_{L^2_{t}H^{s-1}(\mathbb{R}^n_y)}.
\end{equation}
Since $s>\frac{n+1}{2}$, using \eqref{T0}, \eqref{t00}, Lemma \ref{LD4}, and Proposition \ref{LD3}, we can see
\begin{equation}\label{t4}
	\begin{split}
	\| \frac{\partial^2 p}{\partial^2 \bx} \|_{s-1,\theta+ \varepsilon -1 } 
	\leq &  C \| \frac{\partial \bx}{\partial \by} \|^{\frac{n}{2}+s}_{L^\infty(\mathbb{R}^n_x)} \| \frac{\partial^2 \bar{p}}{\partial^2 \bx} \|_{L^2_tH^{s-1}(\mathbb{R}^n_x)}
	\\
	\leq & C \| \frac{\partial \bx}{\partial \by} \|^{\frac{n}{2}+s}_{L^\infty(\mathbb{R}^n_x)} 
	\| \Delta_x \bar{p}\|_{L^2_t H^{s-1}(\mathbb{R}^n_x)}
	\\
	\leq & C \| \frac{\partial \bx}{\partial \by} \|^{\frac{n}{2}+s}_{L^\infty(\mathbb{R}^n_y)} 
	\| \Delta_x \bar{p}\|_{L^2_t H^{s-1}(\mathbb{R}^n_y)}
	\\
	\leq & C (1+\|\bG\|^{\frac{n}{2}+s}_{s,\theta} ) (1+ \|\bG\|^{2(n-1)}_{s,\theta} ) \| Q_0(\bG,\bG) \|_{s-1,0}.
	\end{split}
\end{equation}
By Lemma \ref{nonlinearE}, we get
\begin{equation}\label{t5}
	\| Q_0(\bG,\bG) \|_{s-1,0 } \leq C |\bG|^2_{s,\theta}. 
\end{equation}
Combining \eqref{t3}, \eqref{t4} and \eqref{t5}, it yields
\begin{equation}\label{t6}
	\begin{split}
		| \text{M} \bG |_{s,\theta} \leq   & C ( \|\bU_0\|_{H^{s+1}}+\|\bv_0\|_{H^{s}} )
		\\
		& + CT^{\frac{\varepsilon}{2}} (1+\|\bG\|^{\frac{n}{2}+s}_{s,\theta} ) \left(|\bG|^2_{s,\theta}+ |\bG|^3_{s,\theta}+|\bG|^4_{s,\theta}+ \cdots + |\bG|^{2n}_{s,\theta} \right).
	\end{split}
\end{equation}
Taking $C_1=2C$ in \eqref{Xd}, and choosing 
\begin{equation}\label{CT}
	T=\left\{ \frac{M}{(1+(C+1)^{\frac{n}{2}+s}M^{\frac{n}{2}+s})[ (C+1)^2M^2+(C+1)^3M^3+\cdots+(C+1)^{2n}M^{2n} ]} \right\}^{\frac{2}{\varepsilon}},
\end{equation}
then 
\begin{equation}\label{t7}
	\begin{split}
		| \text{M} \bG |_{s,\theta} \leq   & 2C ( \|\bU_0\|_{H^{s+1}}+\|\bv_0\|_{H^{s}} )=C_1( \|\bU_0\|_{H^{s+1}}+\|\bv_0\|_{H^{s}} ).
	\end{split}
\end{equation}
Hence, \eqref{t7} tells us that $\text{M}$ is a map from $X_{s,\theta}$ to $X_{s,\theta}$. Next, we will prove that $\text{M}$ is a contraction map in $X_{s,\theta}$.

Considering
\begin{equation*}
	Q_0(\bG,\bG)-Q_0(\bH,\bH)=Q_0(\bG-\bH,\bG)+Q_0(\bH,\bG-\bH),
\end{equation*}
so we can conclude
\begin{equation}\label{t8}
	\begin{split}
		| \text{M} \bG -  \text{M} \bH |_{s,\theta} \leq   & C  T^{\frac{\varepsilon}{2}}  |\bG - \bH|_{s,\theta} \left(|\bG|_{s,\theta}+ |\bG|^2_{s,\theta}+|\bG|^3_{s,\theta}+ \cdots + |\bG|^{n+1}_{s,\theta} \right)
		\\
		& + C  T^{\frac{\varepsilon}{2}}  |\bG - \bH|_{s,\theta} \left(|\bH|_{s,\theta}+ |\bH|^2_{s,\theta}+|\bH|^3_{s,\theta}+ \cdots + |\bH|^{n+1}_{s,\theta} \right).
	\end{split}
\end{equation}
When $T$ is sufficiently small, using \eqref{t8}, we shall get 
\begin{equation}\label{t9}
	\begin{split}
		| \text{M} \bG -  \text{M} \bH |_{s,\theta} \leq   & \frac12  |\bG - \bH |_{s,\theta}.
	\end{split}
\end{equation}
Therefore, $\text{M}$ is a contraction mapping in the space $X_{s,\theta}$. Using the contraction mapping principle, we prove that there is a unique solution satisfying \eqref{a08}-\eqref{a09}. Next, we will prove the continuous dependence on initial data for \eqref{p0}. Set $\bG_1$ and $\bG_2$ satisfying
\begin{equation}\label{G1}
	\begin{cases}
		& \square G_1^{ia}=\frac{\partial }{\partial y^a} ( \frac{\partial p_1}{\partial x^i} ) ,
		\\
		& \Delta_x p_1=(\bF_1^{-1})^{kl}(\bF_1^{-1})^{mj} Q_0(G_1^{mk}, G_1^{jl}) ,
		\\
		& 	 (G_1^{ia}, \frac{\partial G_1^{ia}}{\partial t})|_{t=0}=(\frac{\partial U_{01}^{ia}}{\partial y^a}, \frac{\partial v_{01}^{i}}{\partial y^a}),
	\end{cases}
\end{equation}
and
\begin{equation}\label{G2}
	\begin{cases}
		& \square G_2^{ia}=\frac{\partial }{\partial y^a} ( \frac{\partial p_2}{\partial x^i} ) ,
		\\
		& \Delta_x p_2=(\bF_2^{-1})^{kl}(\bF_2^{-1})^{mj} Q_0(G_2^{mk}, G_2^{jl}) ,
		\\
		& 	 (G_2^{ia}, \frac{\partial G_2^{ia}}{\partial t})|_{t=0}=(\frac{\partial U_{02}^{ia}}{\partial y^a}, \frac{\partial v_{02}^{i}}{\partial y^a}),
	\end{cases}
\end{equation}
where $\bF_1^{-1}$ and $\bF_2^{-1}$ satisfy
\begin{equation}\label{G3}
	\begin{split}
		(\bF_1^{-1})^{ia} = & \frac{1}{(n-1)!} \epsilon_{i i_2 i_3 \cdots i_n }\epsilon^{a a_2 a_3 \cdots a_n } (\delta^{i_2 a_2}+ G_1^{i_2 a_2}) \cdots (\delta^{i_n a_n}+ G_1^{i_n a_n}),
		\\
		(\bF_2^{-1})^{ia} = & \frac{1}{(n-1)!} \epsilon_{i i_2 i_3 \cdots i_n }\epsilon^{a a_2 a_3 \cdots a_n } (\delta^{i_2 a_2}+ G_2^{i_2 a_2}) \cdots (\delta^{i_n a_n}+ G_2^{i_n a_n}).
	\end{split}
\end{equation}
Using Lemma \ref{nonlinearE}, we shall prove
\begin{equation}\label{G4}
	\begin{split}
		|\bG_2-\bG_1|_{s,\theta} \leq C & \left( \|\bU_{02}-\bU_{01}\|_{H^{s+1}} +\|\bv_{02}-\bv_{01}\|_{H^{s+1}} \right)
		\\
		& +CT^{\frac{\varepsilon}{2} } |\bG_2 - \bG_1|_{s,\theta} \left(|\bG_1|_{s,\theta}+ |\bG_1|^2_{s,\theta}+|\bG_1|^3_{s,\theta}+ \cdots + |\bG_1|^{n+1}_{s,\theta} \right)
		\\
		& +CT^{\frac{\varepsilon}{2} } |\bG_2 - \bG_1|_{s,\theta} \left(|\bG_2|_{s,\theta}+ |\bG_2|^2_{s,\theta}+|\bG_2|^3_{s,\theta}+ \cdots + |\bG_2|^{n+1}_{s,\theta} \right).
	\end{split}
\end{equation}
By \eqref{G4}, for $\bG_1, \bG_2\in X_{s,\theta}$, if $T$ is sufficiently small, then we can get
 \begin{equation}\label{G5}
 	\begin{split}
 		|\bG_2-\bG_1|_{s,\theta} \leq C & \left( \|\bU_{02}-\bU_{01}\|_{H^{s+1}} +\|\bv_{02}-\bv_{01}\|_{H^{s+1}} \right).
 	\end{split}
 \end{equation}
Therefore, \eqref{G5} tells us
 \begin{equation}\label{G6}
	\begin{split}
		& \|\bG_2-\bG_1\|_{L^\infty_{[0,T]}H^s}+\|\frac{\partial (\bG_2-\bG_1 )}{\partial t}\|_{L^\infty_{[0,T]}H^{s-1}} 
		\\
		\leq & C \left( \|\bU_{02}-\bU_{01}\|_{H^{s+1}} +\|\bv_{02}-\bv_{01}\|_{H^{s+1}} \right).
	\end{split}
\end{equation}
As a result, we have proved the continuous dependence of solutions on the initial data.

We still need to prove the existence of solution $\bU$ in \eqref{wavep}. Using Lemma \ref{nonlinearE} again, we can see that there is a unique solution $\bU$ such that
\begin{equation}\label{tt0}
	\begin{cases}
		& \square U^i=\frac{\partial p}{\partial x^i}, \quad (t,\by)\in [0,T]\times \mathbb{R}^n,
		\\
		& ( U^i, \frac{\partial U^i}{\partial t} )|_{t=0}=(U^i_0,v^i_0).
	\end{cases}
\end{equation}
Moreover, for $\theta \in (\frac12,1)$, the function $\bU$ satisfies the following bound
\begin{equation*}
	| \bU |_{s+1,\theta} \leq C(\|\bU_0\|_{H^{s+1}}+\|\bv_0\|_{H^{s}}+ T^{\frac{\varepsilon}{2}} | \frac{\partial p}{\partial x^i} |_{s,\theta+\varepsilon-1}).
\end{equation*}
Using \eqref{t20}, \eqref{t4}, and \eqref{CT}, we can see
\begin{equation}\label{tt1}
	| \bU |_{s+1,\theta} \leq 2C(\|\bU_0\|_{H^{s+1}}+\|\bv_0\|_{H^{s}}).
\end{equation}
Therefore, $\bU \in C( {[0,T],H^{s+1}} ) \cap C^1( {[0,T],H^{s}}) $. Similarly, for the system \eqref{tt0}, the continuous dependence on the initial data also holds.
\subsection{Proof of Theorem \ref{thm2}}
In the case of $s>s_0(n)$, we will use continuous induction argument. Set
\begin{equation}\label{Tstar}
	T_*=\min\{  (\frac{1}{54C^2(\|\bU_0\|_{H^{s+1}}+\|\bv_0\|_{H^{s}})^3} )^{\frac43},  (\frac{1}{486C^2(\|\bU_0\|_{H^{s+1}}+\|\bv_0\|_{H^{s}})^4} )\}.
\end{equation}
We assume that
\begin{equation}\label{X2}
\|\bG\|_{L^\infty_{[0,T_*]} H^{s}}+\|\frac{\partial \bG}{\partial t}\|_{L^\infty_{[0,T_*]} H^{s-1}} \leq 3( \|\bU_0\|_{H^{s+1}}+\|\bv_0\|_{H^{s}} ) ,
\end{equation}
and
\begin{equation}\label{X3}
	\|\bU\|_{L^\infty_{[0,T_*]} H^{s+1}}+\|\frac{\partial \bU}{\partial t}\|_{L^\infty_{[0,T_*]} H^{s}} \leq 3( \|\bU_0\|_{H^{s+1}}+\|\bv_0\|_{H^{s}} ) .
\end{equation}
Since the Strichartz estimates of linear wave equations is different when $n=2,3$. So we divide it into two cases.

\textit{Case 1:} $n=2$. For $s>\frac74$, using Strichartz estimates of linear wave equations (cf. \cite{KeelTao}), it yields
\begin{equation}\label{X4}
	\begin{split}
	 \| d\bG  \|_{ L^{4}_{[0,T_*]}L^\infty} 
	\leq & C  ( \|\bU_0\|_{H^{s+1}}+\|\bv_0\|_{H^{s}} + \|\frac{\partial }{\partial y}(\frac{\partial p}{\partial x})\|_{ L^{1}_{[0,T_*]} H^{s-1} }  )
	\\
		\leq & C  ( \|\bU_0\|_{H^{s+1}}+\|\bv_0\|_{H^{s}} + (T_*)^{\frac34} \|\frac{\partial }{\partial y}(\frac{\partial p}{\partial x})\|_{ L^{4}_{[0,T_*]} H^{s-1} }  ).
	\end{split}
\end{equation}
Using \eqref{t20}, \eqref{t00}, \eqref{X2}, \eqref{Tstar} and H\"older's inequality, so we obtain
\begin{equation}\label{X5}
	\begin{split}
	 \|\frac{\partial }{\partial y}(\frac{\partial p}{\partial x})\|_{ L^{4}_{[0,T_*]} H^{s-1}(\mathbb{R}^2) }  \leq & C\|\bF^{-1}\|^2_{L^{\infty}_{[0,T_*]\times \mathbb{R}^2} } \|Q_0(\bG,\bG)\|_{ L^{1}_{[0,T_*]} H^{s-1}(\mathbb{R}^2) }
	 \\
	 \leq & C\|\bG\|^2_{L^{\infty}_{[0,T_*]}H^{s} } \| \frac{\partial  \bG}{\partial t},\frac{\partial \bG}{\partial y} \|_{ L^{4}_{[0,T_*]}L^\infty }  \|\bG\|_{ L^{\infty}_{[0,T_*]} H^{s} }
	 \\
	 \leq & 27C( \|\bU_0\|_{H^{s+1}}+\|\bv_0\|_{H^{s}})^3 \| \frac{\partial  \bG}{\partial t},\frac{\partial \bG}{\partial y} \|_{ L^{4}_{[0,T_*]}L^\infty } .
	\end{split}
\end{equation}
Inserting \eqref{X5} to \eqref{X4}, we get
\begin{equation}\label{X6}
		 \| d\bG  \|_{ L^{4}_{[0,T_*]}L^\infty (\mathbb{R}^2)} 
		\leq 2C  ( \|\bU_0\|_{H^{s+1}(\mathbb{R}^2)}+\|\bv_0\|_{H^{s}(\mathbb{R}^2)}  ).
\end{equation}
By using \eqref{a07}, we have
\begin{equation}\label{X6a}
	\| d\bF  \|_{ L^{4}_{[0,T_*]}L^\infty (\mathbb{R}^2)} 
	\leq 2C  ( \|\bU_0\|_{H^{s+1}(\mathbb{R}^2)}+\|\bv_0\|_{H^{s}(\mathbb{R}^2)}  ).
\end{equation}
Using energy estimates of \eqref{p0} and H\"older's inequality, we derive that
\begin{equation}\label{X7}
	\begin{split}
		\| \bG\|_{ L^{\infty}_{[0,T_*]}H^s} + \| \frac{\partial  \bG}{\partial t}  \|_{ L^{\infty}_{[0,T_*]}H^{s-1}}
		\leq  & ( \|\bU_0\|_{H^{s+1}}+\|\bv_0\|_{H^{s}} + C\|\frac{\partial }{\partial y}(\frac{\partial p}{\partial x})\|_{ L^{1}_{[0,T_*]} H^{s-1} }  )
		\\
		\leq  & ( \|\bU_0\|_{H^{s+1}}+\|\bv_0\|_{H^{s}} + CT_*^{\frac34}\|\frac{\partial }{\partial y}(\frac{\partial p}{\partial x})\|_{ L^{4}_{[0,T_*]} H^{s-1} }  ).
	\end{split}
\end{equation}
By using \eqref{X5}, \eqref{X6}, \eqref{Tstar}, we conclude that
\begin{equation}\label{X8}
	\begin{split}
	\| \bG\|_{ L^{\infty}_{[0,T_*]}H^s} + \| \frac{\partial  \bG}{\partial t}  \|_{ L^{\infty}_{[0,T_*]}H^{s-1}}
	\leq & 2(\|\bU_0\|_{H^{s+1}}+\|\bv_0\|_{H^{s}}).
	\end{split}
\end{equation}
Using continuous induction argument, then \eqref{X2} holds. In a similar way, the estimate \eqref{X3} also holds. Hence, we have proved the existence of solutions. Furthermore, we can also get the following Strichartz estimates
\begin{equation}\label{X9a}
	\begin{split}
		\| d(\frac{\partial \bU}{\partial y})\|_{ L^{4}_{[0,T_*]}L^\infty} 
		\leq   & C(\|\bU_0\|_{H^{s+1}}+\|\bv_0\|_{H^{s}}).
	\end{split}
\end{equation}
Using \eqref{aa0}, \eqref{in6}, \eqref{in11}, and \eqref{X9a}, we deduce that
\begin{equation}\label{X9aa}
	\begin{split}
		\| d \bv \|_{ L^{4}_{[0,T_*]}L^\infty} 
		\leq   & C(\|\bU_0\|_{H^{s+1}}+\|\bv_0\|_{H^{s}}).
	\end{split}
\end{equation}

\textit{Case 2:}$n=3$. For $s>2$, using Strichartz estimates of linear wave equations (cf. \cite{KeelTao}), it yields
\begin{equation}\label{X9}
	\begin{split}
		\| d\bG  \|_{ L^{2}_{[0,T_*]}L^\infty } 
	\leq & C  ( \|\bU_0\|_{H^{s+1}}+\|\bv_0\|_{H^{s}} + \|\frac{\partial }{\partial y}(\frac{\partial p}{\partial x})\|_{ L^{1}_{[0,T_*]} H^{s-1} }  )	
	\\
	\leq & C  ( \|\bU_0\|_{H^{s+1}}+\|\bv_0\|_{H^{s}} + (T_*)^{\frac12}\|\frac{\partial }{\partial y}(\frac{\partial p}{\partial x})\|_{ L^{2}_{[0,T_*]} H^{s-1} }  ).
	\end{split}
\end{equation}
Noting \eqref{t20}, \eqref{t00}, \eqref{X2}, \eqref{Tstar} and using H\"older's inequality, so we obtain
\begin{equation}\label{X10}
	\begin{split}
		\|\frac{\partial }{\partial y}(\frac{\partial p}{\partial x})\|_{ L^{2}_{[0,T_*]} H^{s-1}(\mathbb{R}^3) }  \leq & C\|\bF^{-1}\|^2_{L^{\infty}_{[0,T_*]\times \mathbb{R}^3} } \|Q_0(\bG,\bG)\|_{ L^{1}_{[0,T_*]} H^{s-1}(\mathbb{R}^3) }
		\\
		\leq & C\|\bG\|^4_{L^{\infty}_{[0,T_*]}H^{s} } \| \frac{\partial  \bG}{\partial t},\frac{\partial \bG}{\partial y} \|_{ L^{4}_{[0,T_*]}L^\infty }  \|\bG\|_{ L^{\infty}_{[0,T_*]} H^{s} }
		\\
		\leq & 243C(\|\bU_0\|_{H^{s+1}}+\|\bv_0\|_{H^{s}})^4 \| \frac{\partial  \bG}{\partial t},\frac{\partial \bG}{\partial y} \|_{ L^{2}_{[0,T_*]}L^\infty } .
	\end{split}
\end{equation}
Inserting \eqref{X10} to \eqref{X9}, we get
\begin{equation}\label{X11}
	\| d\bG  \|_{ L^{2}_{[0,T_*]}L^\infty (\mathbb{R}^3)} 
	\leq 2C  ( \|\bU_0\|_{H^{s+1}(\mathbb{R}^3)}+\|\bv_0\|_{H^{s}(\mathbb{R}^3)}  ).
\end{equation}
By using \eqref{a07}, it yields
\begin{equation}\label{X11a}
	\| d\bF  \|_{ L^{2}_{[0,T_*]}L^\infty (\mathbb{R}^3)} 
	\leq 2C  ( \|\bU_0\|_{H^{s+1}(\mathbb{R}^3)}+\|\bv_0\|_{H^{s}(\mathbb{R}^3)}  ).
\end{equation}
Using energy estimates of \eqref{p0} and H\"older's inequality, we derive that
\begin{equation}\label{X12}
	\begin{split}
		\| \bG\|_{ L^{\infty}_{[0,T_*]}H^s} + \| \frac{\partial  \bG}{\partial t}  \|_{ L^{\infty}_{[0,T_*]}H^{s-1}}
		\leq  & ( \|\bU_0\|_{H^{s+1}}+\|\bv_0\|_{H^{s}} + C\|\frac{\partial }{\partial y}(\frac{\partial p}{\partial x})\|_{ L^{1}_{[0,T_*]} H^{s-1} }  )
		\\
		\leq  & ( \|\bU_0\|_{H^{s+1}}+\|\bv_0\|_{H^{s}} + CT_*^{\frac12}\|\frac{\partial }{\partial y}(\frac{\partial p}{\partial x})\|_{ L^{2}_{[0,T_*]} H^{s-1} }  ).
	\end{split}
\end{equation}
By using \eqref{X5}, \eqref{X6}, \eqref{Tstar}, we conclude that
\begin{equation}\label{X13}
	\begin{split}
		\| \bG\|_{ L^{\infty}_{[0,T_*]}H^s} + \| \frac{\partial  \bG}{\partial t}  \|_{ L^{\infty}_{[0,T_*]}H^{s-1}}
		\leq   & 2(\|\bU_0\|_{H^{s+1}}+\|\bv_0\|_{H^{s}}).
	\end{split}
\end{equation}
Using the continuous induction argument, then \eqref{X2} holds. In a similar way, we can also derive \eqref{X3}. Furthermore, we can get the following Strichartz estimates
\begin{equation}\label{X13a}
	\begin{split}
		\| d(\frac{\partial \bU}{\partial y})\|_{ L^{2}_{[0,T_*]}L^\infty} 
		\leq   & C(\|\bU_0\|_{H^{s+1}}+\|\bv_0\|_{H^{s}}).
	\end{split}
\end{equation}
Using \eqref{aa0}, \eqref{in6}, \eqref{in11}, and \eqref{X13a}, we deduce that
\begin{equation}\label{X13aa}
	\begin{split}
		\| d \bv\|_{ L^{2}_{[0,T_*]}L^\infty} 
		\leq   & C(\|\bU_0\|_{H^{s+1}}+\|\bv_0\|_{H^{s}}).
	\end{split}
\end{equation}
Therefore, for $n=2$ or $3$, the bounds \eqref{X2} and \eqref{X3} both hold. By \eqref{X9aa} and \eqref{X13aa}, we also get \eqref{thm20}. So we have finished the proof of Theorem \ref{thm2}.
\subsection{Proof of Theorem \ref{thmb}}
We will also use the contraction principle to prove the existence and uniqueness of solution. Recall \eqref{p0}, we need some information of $\frac{\partial}{\partial y}\partial_x p$. By chain's rule, so we calculate
\begin{equation}\label{g2}
	\begin{split}
		\Delta_x p=&\frac{\partial}{\partial y^b}  ( \frac{\partial p}{\partial y^a} \frac{\partial y^a}{\partial x^i} ) \frac{\partial y^b}{\partial x_i}
		\\
		=&  \frac{\partial^2 p}{\partial y^a \partial y^b} (\bF^{-1})^{ai}(\bF^{-1})^{b}_{\ i} 
		+ \frac{\partial p}{\partial y^a}  \frac{\partial (\bF^{-1})^{ai}}{\partial y^b} (\bF^{-1})^{b}_{\ i} 
		\\
		=& \Delta_y p+  \frac{\partial^2 p}{\partial y^a \partial y^b} \left\{  (\bF^{-1})^{ai}(\bF^{-1})^{b}_{\ i} -\delta^{ab} \right\}
		+ \frac{\partial p}{\partial y^a}  \frac{\partial (\bF^{-1})^{ai}}{\partial y^b} (\bF^{-1})^{b}_{\ i} .
	\end{split}
\end{equation}
For $\theta > \frac12$ and $s>\frac{n}{2}$, we define the solution space
\begin{equation}\label{Xd}
Y_{s,\theta}=\left\{(\bG,\bv)\in \mathcal{S}'(\mathbb{R}^{1+n}): |\bG|_{s,\theta}\leq C_2\eta \right\},
\end{equation}
where $C_2$ will be defined later. For any $(\bG,\bv) \in Y_{s,\theta}$, we denote the map $\text{M}$ by
\begin{equation}\label{z0}
	\begin{split}
		\text{M} G^{ia}= 	&\chi(t) \left\{ \cos(tD) (\frac{\partial U^i_0}{\partial y^a}) +  D^{-1} \sin(tD) (\frac{\partial v^i_0}{\partial y^a}) \right\} 
		\\
		& + \chi(\frac{t}{T}) \int^t_0 D^{-1} \sin( (t-t')D ) \left\{  \phi( T^{\frac12}\Lambda_{-} ) \frac{\partial }{\partial y^a}(\frac{\partial p}{\partial x_i}) \right\} (t')dt'
		\\
		&+  \square^{-1} ( 1-\phi( T^{\frac12}\Lambda_{-} ) )  \frac{\partial }{\partial y^a}(\frac{\partial p}{\partial x_i}),
	\end{split}	
\end{equation}
Therefore, we have
\begin{equation}\label{z2}
	\begin{split}
	\square 	\text{M} G^{ia}= 	& \frac{\partial }{\partial y^a}(\frac{\partial p}{\partial x_i}) .
	\end{split}	
\end{equation}
Due to \eqref{z0} and Lemma \ref{nonlinearE}, it yields
\begin{equation}\label{z3}
	|\text{M} \bG|_{s,\theta} \leq C( \| \bU_0\|_{H^{s+1}}+\| \bv_0\|_{H^{s}} + 
	\|\frac{\partial }{\partial y}(\partial_{x} p) \|_{s-1,\theta-1} ),
\end{equation}
In the following, let us bound $\|\frac{\partial }{\partial y}(\partial_{x} p) \|_{s-1,\theta-1}$. By $ \Delta_x p=(\bF^{-1})^{kl}(\bF^{-1})^{mj} Q_0(G^{mk}, G^{jl})$ and Lemma \ref{bilinearE}, we can obtain
\begin{equation}\label{V7}
	\begin{split}
		\|\Delta_x p\|_{s-1,\theta-1} \leq & C\|Q_0(\bG,\bG)\|_{s-1,\theta-1}(1+ |\bG|_{s,\theta}+\dots  + |\bG|^{2(n-1)}_{s,\theta})
		\\
		\leq & C(|\bG|^2_{s,\theta}+ |\bG|^3_{s,\theta}+\dots  + |\bG|^{2n}_{s,\theta}).
	\end{split}
\end{equation}
By \eqref{g2} and \eqref{a06}, it follows
\begin{equation}\label{key1}
	\begin{split}
		& \|\Delta_x p\|_{s-1,\theta-1}
		\\
		 \geq & \|\Delta_y p\|_{s-1,\theta-1} 
		- C \| \frac{\partial^2 p}{\partial y^2}\|_{s-1,\theta-1} (  |\bG|_{s,\theta}+\dots  + |\bG|^{n-1}_{s,\theta}  )  -C \| \frac{\partial p}{\partial y} \frac{\partial \bG}{\partial y} \|_{s-1,\theta-1}.
	\end{split}
\end{equation}
On one hand, we can see
\begin{equation}\label{k10}
	\| \frac{\partial^2 p}{\partial y^2}  \|_{s-1,\theta-1} \approx \|\Delta_y p\|_{s-1,\theta-1}
\end{equation}
On the other hand, from Lemma \ref{prodE}, we can infer that
\begin{equation}\label{k11}
	\begin{split}
			\| \frac{\partial p}{\partial y} \frac{\partial \bG}{\partial y} \|_{s-1,\theta-1}
		\lesssim & \| \frac{\partial p}{\partial y}  \|_{s,\theta-1}  \|  \frac{\partial \bG}{\partial y} \|_{s-1,\theta}
		\\
		\lesssim & ( \| \frac{\partial p}{\partial y}  \|_{0,\theta-1}+\| \frac{\partial^2 p}{\partial y^2}  \|_{s-1,\theta+\epsilon-1}) |\bG|_{s,\theta}
		\\
		\lesssim &  \| \frac{\partial p}{\partial y}  \|_{0,\theta-1} |\bG|_{s,\theta} +\| \frac{\partial^2 p}{\partial y^2}  \|_{s-1,\theta-1} |\bG|_{s,\theta},
	\end{split}
\end{equation}
holds. In \eqref{k11}, there is also a lower-order term $\| \frac{\partial p}{\partial y}  \|_{0,\theta-1}$. Noting $\theta-1\leq 0$, then we have
\begin{equation}\label{pd0}
	\begin{split}
			\| \frac{\partial p}{\partial y}  \|_{0,\theta-1}=& \| \frac{\partial p}{\partial x}  \frac{\partial x}{\partial y} \|_{0,\theta-1}
			\\
			\leq & C\| \frac{\partial p}{\partial x} \|_{0,\theta-1} ( 1+ |\bG|_{s,\theta} )
			\\
			 \leq & \| \frac{\partial {p}}{\partial x}  \|_{L^2_{[0,1]} L^2(\mathbb{R}^n_y)}( 1+ |\bG|_{s,\theta} ).
	\end{split}
\end{equation}
Noting \eqref{in13}, using $\dot{B}^0_{2,1} \hookrightarrow L^2$ (\cite{BCD}, Proposition 2.29) and interpolation formula (\cite{BCD}, Corollary 2.55), we can see
\begin{equation}\label{pd1}
	\begin{split}
		\| \frac{\partial {p}}{\partial x}  \|_{ L^2(\mathbb{R}^n_y)}
		=& \| \frac{\partial \bar{p}}{\partial x}  \|_{ L^2(\mathbb{R}^n_x)}
		\\
		= & \| (-\Delta_x)^{-\frac12}  \Delta_x \bar{p} \|_{ L^2(\mathbb{R}^n_x)}
		\\
		\leq & C \| \Delta_x \bar{p} \|_{ \dot{B}^{-1}_{2,1}(\mathbb{R}^n_x)}
		\\
		\leq & C ( \| \partial_x \bar{\bv} \|_{ \dot{H}^{\frac{n}{2}-1}(\mathbb{R}^n_x)} \cdot \|\partial_x \bar{\bv} \|_{ \dot{H}^{0}(\mathbb{R}^n_x)}+ \| \partial_x \bar{\bF} \|_{ \dot{H}^{\frac{n}{2}-1}(\mathbb{R}^n_x)} \cdot \| \partial_x \bar{\bF} \|_{ \dot{H}^{0}(\mathbb{R}^n_x)} )
		\\
			\leq & C ( \| \partial_x \bar{\bv} \|_{ \dot{H}^{\frac{n}{2}-1}(\mathbb{R}^n_y)}  \|\partial_x \bar{\bv} \|_{ \dot{H}^{0}(\mathbb{R}^n_y)}+ \| \partial_x \bar{\bF} \|_{ \dot{H}^{\frac{n}{2}-1}(\mathbb{R}^n_y)}  \| \partial_x \bar{\bF} \|_{ \dot{H}^{0}(\mathbb{R}^n_y)} )\|\frac{\partial \bx}{\partial \by}\|^{n-1}_{L^\infty(\mathbb{R}^n_y)}
		\\
		\leq & C ( \|  \frac{\partial {\bv}}{\partial y} \|_{ \dot{H}^{\frac{n}{2}-1}(\mathbb{R}^n_y)} \|\frac{\partial {\bv}}{\partial y} \|_{ \dot{H}^{0}(\mathbb{R}^n_y)}+ \| \frac{\partial {\bF}}{\partial y}\|_{ \dot{H}^{\frac{n}{2}-1}(\mathbb{R}^n_y)} \| \frac{\partial {\bF}}{\partial y} \|_{ \dot{H}^{0}(\mathbb{R}^n_y)} ) \|\frac{\partial \bx}{\partial \by}\|^{n-1}_{L^\infty(\mathbb{R}^n_y)}
		\\
		\leq & C |\bG|^2_{s,\theta} (1+ |\bG|_{s,\theta}).
	\end{split}
\end{equation}
Above, we use $\frac{\partial {\bv}}{\partial y} =\frac{\partial \bG}{\partial t}$ and $s>\frac{n}{2}$. Substituting \eqref{pd1} to \eqref{pd0}, it yields
\begin{equation}\label{pd2}
		\| \frac{\partial p}{\partial y}  \|_{0,\theta-1}
		\leq  |\bG|^2_{s,\theta}( 1+ |\bG|^2_{s,\theta} ).
\end{equation}
To conclude our outcome \eqref{key1}, \eqref{k10}, \eqref{k11}, and \eqref{pd2}, we have 
\begin{equation}\label{key3}
		 \|\Delta_x p\|_{s-1,\theta-1}
		\geq  \|\Delta_y p\|_{s-1,\theta-1} 
		\left\{ 1- C  ( |\bG|_{s,\theta}+\dots  + |\bG|^{n-1}_{s,\theta}  ) \right\} -C |\bG|^3_{s,\theta} (1+|\bG|^2_{s,\theta}).
\end{equation}
For $\bG \in Y_{s,\theta}$ and $\eta$ is small enough, so we have
\begin{equation}\label{key2}
	1- C  ( |\bG|_{s,\theta}+\dots  + |\bG|^{n-1}_{s,\theta}  ) \geq \frac{1}{2}.
\end{equation}
By \eqref{key3} and \eqref{key2}, we therefore get
\begin{equation}\label{V8}
	\begin{split}
	\|\Delta_y p\|_{s-1,\theta-1} \leq	& C\|\Delta_x p\|_{s-1,\theta-1}
	+ C|\bG|^3_{s,\theta} (1+|\bG|^2_{s,\theta}).
	\end{split}
\end{equation}
Substituting \eqref{V7} to \eqref{V8}, it implies
\begin{equation}\label{V9}
	\begin{split}
		\|\Delta_y p\|_{s-1,\theta-1} \leq	& 
		C(|\bG|^2_{s,\theta}+\dots+|\bG|^{2n}_{s,\theta})\leq C(C_2\eta)^2.
	\end{split}
\end{equation}
Since $ \frac{\partial}{\partial y} ( \partial_x p )=\frac{\partial^2 p}{\partial y^2} \bF^{-1}+ \frac{\partial p}{\partial y} \frac{\partial \bF^{-1}}{\partial y} $, it yields
\begin{equation}\label{V10}
	\begin{split}
		\|\frac{\partial}{\partial y} ( \partial_x p )\|_{s-1,\theta-1} \lesssim	&   \|\frac{\partial^2 p}{\partial y^2} \|_{s-1,\theta-1}( 1+ |\bG|_{s,\theta}+\dots + |\bG|^{n-1}_{s,\theta} ) + \|\frac{\partial p}{\partial y} \|_{s,\theta-1} |\bG|_{s,\theta} .
	\end{split}
\end{equation}
Adding \eqref{pd2} and \eqref{V9} to \eqref{V10}, we have
\begin{equation*}\label{V11}
	\begin{split}
		 \|\frac{\partial}{\partial y} ( \partial_x p )\|_{s-1,\theta-1} 
		\leq	
		& C( |\bG|^2_{s,\theta}+|\bG|^3_{s,\theta}+\cdots |\bG|^{3n-1}_{s,\theta}).
	\end{split}
\end{equation*}
Therefore, we obtain
\begin{equation*}
	\begin{split}
		(1- C|\bG|_{s,\theta} )\|\frac{\partial}{\partial y} ( \partial_x p )\|_{s-1,\theta-1} 
		\leq	
		&  C( |\bG|^2_{s,\theta}+|\bG|^3_{s,\theta}+\cdots |\bG|^{3n-1}_{s,\theta}).
	\end{split}
\end{equation*}
For $\epsilon$ is small enough, so we conclude that
\begin{equation}\label{V12}
	\begin{split}
\|\frac{\partial}{\partial y} ( \partial_x p )\|_{s-1,\theta-1} 
		\leq	
		&  C( |\bG|^2_{s,\theta}+|\bG|^3_{s,\theta}+\cdots |\bG|^{3n-1}_{s,\theta})
		\\
		\leq & C (C_2 \eta)^2.
	\end{split}
\end{equation}
Taking
\begin{equation}\label{T}
	C_2=4(1+C),\quad  0<  \eta \leq \frac{1}{16(1+C)^2},
\end{equation}
using \eqref{z3} and \eqref{V12}, we therefore prove
\begin{equation*}
	|\textrm{M} \bG|_{s,\theta} \leq 2(1+C)\eta.
\end{equation*}
This implies that $\textrm{M}$ is a map from $Y_{s,\theta}$ to $Y_{s,\theta}$. For $\bG, \bG_*\in Y_{s,\theta}$, we can get
\begin{align}\label{V15}
	|\text{M} \bG- \text{M} \bG_*|_{s,\theta} \leq & C
	\|\frac{\partial }{\partial y}(\partial_{x} p-\partial_{x} p_* ) \|_{s-1,\theta-1}.
\end{align}
For $p-p_*$, on one hand, it satisfies
\begin{equation}\label{A10}
	\Delta_x p- \Delta_x p_*=(\bF^{-1})^{kl}(\bF^{-1})^{mj} Q_0(G^{mk}, G^{jl})-(\bF_*^{-1})^{kl}(\bF_*^{-1})^{mj} Q_0(G_*^{mk}, G_*^{jl}).
\end{equation}
Therefore, we can bound it by
\begin{equation}\label{V70}
	\| \Delta_x p- \Delta_x p_* \|_{s-1,\theta+\varepsilon-1} \leq C |\bG-\bG_*|_{s,\theta} ( |\bG|_{s,\theta}+ |\bG_*|_{s,\theta}).
\end{equation}
On the other hand, similar with \eqref{g2}, it yields
\begin{equation}\label{V17}
	\begin{split}
		\Delta_x (p-p_*)
		=& \Delta_y (p-p_*)+  \frac{\partial^2 p}{\partial y^a \partial y^b} \left\{  (\bF^{-1})^{ai}(\bF^{-1})^{b}_{\ i} -\delta^{ab} \right\}-\frac{\partial^2 p_*}{\partial y^a \partial y^b} \left\{  (\bF_*^{-1})^{ai}(\bF_*^{-1})^{b}_{\ i} -\delta^{ab} \right\}
		\\
		&
		+ \frac{\partial p}{\partial y^a}  \frac{\partial (\bF^{-1})^{ai}}{\partial y^b} (\bF^{-1})^{b}_{\ i}
		-\frac{\partial p_*}{\partial y^a}  \frac{\partial (\bF_*^{-1})^{ai}}{\partial y^b} (\bF_*^{-1})^{b}_{\ i} .
	\end{split}
\end{equation}
Similarly, by \eqref{k11}, we can obtain
\begin{align}\label{V30}
	& \| \frac{\partial (p-p_*)}{\partial y} \frac{\partial \bG}{\partial y} \|_{s-1,\theta-1}
	\lesssim  \| \partial_{y} (p-p_*) \|_{0,\theta-1}  |\bG|_{s,\theta}   + \| \frac{\partial^2 (p-p_*)}{\partial y^2} \|_{s-1,\theta-1} |\bG|_{s,\theta},
	\\
	\label{V40}
	& \| \frac{\partial p_*}{\partial y} \frac{\partial (\bG-\bG_*) }{\partial y} \|_{s-1,\theta-1}\lesssim  \| \partial_{y} p_* \|_{0,\theta-1}  |\bG-\bG_*|_{s,\theta}   + \| \frac{\partial^2 p_*}{\partial y^2} \|_{s-1,\theta-1} |\bG-\bG_*|_{s,\theta}.
\end{align}
Similarly, by \eqref{pd2}, we can obtain
\begin{equation}\label{V50}
	\begin{split}
		\| \partial_{y} (p-p_*) \|_{0,\theta-1}  \lesssim  & |\bG-\bG_*|_{s,\theta} ( |\bG|_{s,\theta}+|\bG_*|_{s,\theta} ).
	\\
	\| \partial_{y} p_* \|_{0,\theta-1} \lesssim  & |\bG_*|^2_{s,\theta}.
	\end{split}
\end{equation}
Due to \eqref{V17}, \eqref{V30}, \eqref{V40}, and \eqref{V50}, we have
\begin{equation*}
	\begin{split}
		& \|\Delta_x (p-p_*)\|_{s-1,\theta-1}
		\\
		\geq & \|\Delta_y (p-p_*)\|_{s-1,\theta-1} 
		- C \| \frac{\partial^2 (p-p_*)}{\partial y^2}\|_{s-1,\theta-1} (  |\bG|_{s,\theta}  + |\bG|^{n-1}_{s,\theta} + |\bG_*|_{s,\theta}  + |\bG_*|^{n-1}_{s,\theta})  
		\\
		& - C \| \frac{\partial^2 p_*}{\partial y^2}\|_{s-1,\theta-1}   |\bG-\bG_*|_{s,\theta} (|\bG|_{s,\theta}   + |\bG|^{n-2}_{s,\theta} + |\bG_*|_{s,\theta}  + |\bG_*|^{n-2}_{s,\theta})  
		\\
		& -C \| \frac{\partial (p-p_*)}{\partial y} \frac{\partial \bG}{\partial y} \|_{s-1,\theta-1}  -C \| \frac{\partial p_*}{\partial y} \frac{\partial (\bG-\bG_*) }{\partial y} \|_{s-1,\theta-1}
		\\
		\geq & \frac12\|\Delta_y (p-p_*)\|_{s-1,\theta-1} - C  |\bG-\bG_*|_{s,\theta}  (|\bG|_{s,\theta}+|\bG_*|_{s,\theta})
	\end{split}
\end{equation*}
As a result, we get
\begin{equation}\label{V80}
		\|\Delta_y (p-p_*)\|_{s-1,\theta-1} \leq  C\|\Delta_x (p-p_*)\|_{s-1,\theta-1}
	+ C   |\bG-\bG_*|_{s,\theta}  (|\bG_{s,\theta}+|\bG_*|_{s,\theta}).
\end{equation}
Inserting \eqref{V70} to \eqref{V80}, it tells us
\begin{equation}\label{V90}
	\|\Delta_y (p-p_*)\|_{s-1,\theta-1} \leq   C   |\bG-\bG_*|_{s,\theta}  (|\bG_{s,\theta}+|\bG_*|_{s,\theta}).
\end{equation}
Since
\begin{equation*}
	\frac{\partial}{\partial y} ( \partial_x p-\partial_x p_* )=\frac{\partial^2 p}{\partial y^2} \bF^{-1}+ \frac{\partial p}{\partial y} \frac{\partial \bF^{-1}}{\partial y}-( \frac{\partial^2 p_*}{\partial y^2} \bF_*^{-1} + \frac{\partial p_*}{\partial y} \frac{\partial \bF_*^{-1}}{\partial y}),
\end{equation*}
we have
\begin{equation}\label{V100}
	\begin{split}
		& \|\frac{\partial}{\partial y} ( \partial_x p-\partial_x  p_* )\|_{s-1,\theta-1} 
		\\
		\lesssim	&   \|\frac{\partial^2 (p-p_*)}{\partial y^2} \|_{s-1,\theta-1}( 1+ |\bG|_{s,\theta}+\dots + |\bG|^{n-1}_{s,\theta} ) 
		\\
		&+ \|\frac{\partial^2 p_*}{\partial y^2} \|_{s-1,\theta-1} |\bG-\bG_*|_{s,\theta} ( |\bG|_{s,\theta}+ |\bG_*|_{s,\theta}+|\bG|_{s,\theta}+ |\bG_*|^{n-2}_{s,\theta}  )
		\\
		& + \|\frac{\partial (p-p_*)}{\partial y} \|_{s,\theta-1} |\bG|_{s,\theta}+ \|\frac{\partial p_*}{\partial y} \|_{s,\theta-1} |\bG-\bG_*|_{s,\theta} .
	\end{split}
\end{equation}
Combing with \eqref{V90} and \eqref{V100}, we conclude that
\begin{equation}\label{A20}
	\|\frac{\partial}{\partial y} ( \partial_x p-\partial_x  p_* )\|_{s-1,\theta-1}  \leq C   |\bG-\bG_*|_{s,\theta}  (|\bG_{s,\theta}+|\bG_*|_{s,\theta}).
\end{equation}
By \eqref{V15} and \eqref{A20}, it yields
\begin{align*}
	|\text{M} \bG- \text{M} \bG_*|_{s,\theta}  \leq  
	C   |\bG-\bG_*|_{s,\theta}  (|\bG_{s,\theta}+|\bG_*|_{s,\theta}) .
\end{align*}
Noting \eqref{T}, so we have
\begin{align}\label{A21}
	|\text{M} \bG- \text{M} \bG_*|_{s,\theta} \leq & \frac12
	|\bG-\bG_*|_{s,\theta}.
\end{align}
Therefore, the map $\textrm{M}$ is a contraction. By the contraction mapping theorem, there is a unique solution to \eqref{p0}. Moreover, the solution satisfies
\begin{equation*}
	| \bv |_{s,\theta}+ | \bF |_{s,\theta} \lesssim \eta.
\end{equation*}
In a similar way, we can prove the continuous dependence of solution. At this stage, we have finished the proof of Theorem \ref{thmb}.
\section*{Acknowledgments} 
The authors sincerely express a great attitude to the reviewers for their helpful comments. 
The author is supported by National Natural Science Foundation of China (Grant No.12101079) and the Fundamental Research Funds for the Central Universities (Grant No.531118010867).

\section*{Conflicts of interest and Data Availability Statements}
The authors declared that this work does not have any conflicts of interest. The authors also confirm that the data supporting the findings of this study are available within the article.

\end{document}